\newcommand{\myAd}{\mathcal{A}\!\!d}
\newcommand{\upL}{\mathrm{L}}
\newcommand{\ds}{\displaystyle}
\newcommand{\bkappa}{\bm{\kappa}}
\newcommand{\bom}{\bm{\sigma}}
\newcommand{\eubu}{\mathrm{E}_\mbu}
\newcommand{\eubka}{\mathrm{E}_{\bkappa}}
\newcommand{\es}{\mathrm{S}}
\newcommand{\iup}{\iota}
\newcommand{\id}{\mathrm{id}}
\newcommand{\mbu}{\mathbf{u}}
\newcommand{\man}{\mathcal{M}}
\newcommand{\ddt}{\ds\frac{{\rm d}}{\rm{d}t}}
\begin{document}
\title{Moving Frames and Noether's  Finite Difference Conservation Laws II.}
\shorttitle{Noether's Finite Difference Conservation Laws II}
\author{%
{\sc
E. L. Mansfield\thanks{Corresponding author. Email: e.l.mansfield@kent.ac.uk},  
A. Rojo-Echebur\'{u}a}\thanks{Email: arer2@kent.ac.uk}\\ \vskip2pt
SMSAS, University of Kent, Canterbury, CT2 7FS, UK
}
\shortauthorlist{E.L. Mansfield \emph{et al.}}

\maketitle

%%%%%%%%%%%%%%%%%%%%%%%%%%%%%%%%%%%%%%%%%%%%%%%%%%%%%%%%%%%%%%%%%%%%%%%%%%%%%%%%%%%%%%%%%%%%%%%%%%%%%%%%%%%%%%
%%%%%%%%%%%%%%%%%%%%%%%%%%%%          ABSTRACT                    %%%%%%%%%%%%%%%%%%%%%%%%%%%%%%%%%%%%%%%%%%%%
%%%%%%%%%%%%%%%%%%%%%%%%%%%%%%%%%%%%%%%%%%%%%%%%%%%%%%%%%%%%%%%%%%%%%%%%%%%%%%%%%%%%%%%%%%%%%%%%%%%%%%%%%%%%%%
\begin{abstract}
{In this second part of the paper, we consider finite difference Lagrangians which are invariant under linear and projective actions of  $SL(2)$, and the linear equi-affine action which preserves area in the plane.  

We first find the generating invariants, and then use the results of the first part of the paper to write the Euler--Lagrange difference equations and  Noether's difference conservation laws for any invariant Lagrangian, in terms of the invariants and a difference moving frame. We then give the details of the final integration step, assuming the Euler Lagrange equations have been solved for the invariants. This last step relies on understanding the Adjoint action of the Lie group on its Lie algebra. We also use methods to integrate Lie group invariant difference equations developed in Part I. 

Effectively, for all three actions, we show that solutions to the Euler--Lagrange equations, in terms of the original dependent variables, share a common structure for the whole set of  Lagrangians invariant under each given group action, once the invariants are known as functions on the lattice.}
{Noether's Theorem, Finite Difference, Discrete Moving Frames}
\end{abstract}
%%%%%%%%

%\maketitle

\tableofcontents

\section{Introduction}

This is a continuation of \emph{Moving Frames and Noether's  Finite Difference Conservation Laws I} \cite{newpaper} where now we consider Lie group actions of the special linear group $SL(2)$ and $SL(2) \ltimes {\mathbb{R}}^2$. The Lie group  $SL(2)$ is the set of $2\times 2$ real (or complex) matrices with determinant equal to unity. Its typical element is written as 
\begin{equation}\label{typSL2} g=\left(\begin{array}{cc} a &b \\ c & d \end{array} \right),\qquad ad-bc=1.\end{equation}
Its Lie algebra, the set of $2\times 2$ real (or complex) matrices with zero trace, is denoted $\mathfrak{sl}(2)$. Part I of this paper %\cite{newpaper} 
developed all the necessary theory and considered simpler solvable groups; here we show some additional techniques needed for semi-simple groups. Smooth variational problems with an $SL(2)$ and $SL(2) \ltimes {\mathbb{R}}^2$ symmetry were considered using moving frame techniques in \cite{GonMan}, \cite{GonMan3} and \cite{Mansfield:2010aa}. Here we show the finite difference analogue for these variational problems. 

{We first recall the notation, definitions and the main Theorems developed in the first part of this paper (\cite{newpaper}), which we will need; citations to the literature on standard notions from the discrete calculus of variations and on moving frames can be found in the Introduction to Part 1.}

{We consider the dependent variables to take values in $U\subset \mathbb{R}^q$ with coordinates  $\mathbf{u}=(u^1,\dots, u^q)$. 
We use the notation $u^\alpha_j=u^\alpha(n+j)$ for $\alpha = 1,\cdots,q$ and $n,j \in \mathbb{Z}$. } % where the set of integers $j$ is finite.} 
%We will use the notation $u^\alpha$ for $u^\alpha(n)$ and $\mbu_j$ for $\mbu(n+j)$.
The (forward) shift operator $\es$ acts on functions of $n$ as follows:
\[
\es:n\mapsto n+1,\qquad \es:f(n)\mapsto f(n+1),
\]
for all functions $f$ whose domain includes $n$ and $n+1$. In particular, $$\es:u^\alpha_j\mapsto u^\alpha_{j+1}$$
on any domain where both of these quantities are defined.
The forward difference operator is $\es-\id$, where $\id$ is the identity operator:
\[
\id:n\mapsto n,\qquad \id:f(n)\mapsto f(n),\qquad \id:u^\alpha_j\mapsto u^\alpha_j.
\]
The shift $\es$ is an operator on $P^{(-\infty,\infty)}_n(U)$
where
$P^{(J_0,J)}_n(U)\simeq U\times\cdots\times U$ ($J-J_{0}+1$ copies) with coordinates $z=(\mbu_{J_{0}},\dots,\mbu_J)$, where $J_{0}\leq 0$ and $J\geq 0$. 
We denote the $j^{\mathrm{th}}$ power of $\es$ by $\es_j$, so that $\mbu_j=\es_j\mbu_0$ for each $j\in\mathbb{Z}$.

{We will consider actions that are assumed to be free and regular on a manifold $M$. Therefore, there exists a cross section $\mathcal{K}\subset M$ that is transverse to the orbits $\mathcal{O}(z)$ and, for each $z\in M$, the set $\mathcal{K}\cap \mathcal{O}(z)$ has just one element, the projection of $z$ onto $\mathcal{K}$.
Using the  cross-section $\mathcal{K}$, a moving frame for the group action on {{a neighbourhood $\mathcal{U}\subset M$ of $z$}} can be defined as follows.
\begin{definition}[Moving Frame]
Given a smooth Lie group action $G\times M\rightarrow M$, a moving frame is an equivariant map $\rho: \mathcal{U}\subset M\rightarrow G$. Here $\mathcal{U}$ is called the domain of the frame.
\end{definition}
A left equivariant map satisfies $\rho(g\cdot z) = g\rho(z)$, and a right equivariant map satisfies $\rho(g\cdot z)=\rho(z)g^{-1}$. 
In order to find the frame, let the cross-section $\mathcal{K}$ be given by a system of equations $\psi_r(z)=0$, for $r=1,\ldots,R,$ where $R$ is the dimension of the group $G$. One then solves the
so-called normalization equations,
\begin{equation}\label{frameEqA}
\psi_r(g\cdot z) = 0, \qquad r=1,\ldots, R,
\end{equation}
for $g$ as a function of $z$.
The solution is the group element $g=\rho(z)$ that maps $z$ to its projection on $\mathcal{K}$.
In other words, the frame $\rho$ satisfies
$$
\psi_r(\rho(z)\cdot z)=0, \qquad r=1,\ldots, R. %\qquad \mbox{ and } 
$$
}
{\begin{lemma}[Normalized Invariants]
Given a left or right action $G\times M \rightarrow M$ and a right frame $\rho$, then
 $\iup(z)=\rho(z)\cdot z$, for $z$ in the domain of the frame $\rho$, is invariant under the group action.
\end{lemma}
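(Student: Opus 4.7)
The plan is to prove invariance by direct computation, using the defining property of a right equivariant frame together with the associativity axiom of the group action. There is essentially only one calculation to do, and the lemma's content is more about the algebraic compatibility between the frame equivariance and the action than any deeper geometric fact.

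First I would unpack the definition: $\iup(g\cdot z) = \rho(g\cdot z)\cdot (g\cdot z)$. Then substitute the right-equivariance identity $\rho(g\cdot z) = \rho(z)g^{-1}$ into the first factor to obtain
\[
\iup(g\cdot z) \;=\; \bigl(\rho(z)g^{-1}\bigr)\cdot (g\cdot z).
\]
Finally, regroup using the axiom for the action: in the left-action case $(h_1h_2)\cdot z = h_1\cdot(h_2\cdot z)$ gives
\[
\bigl(\rho(z)g^{-1}\bigr)\cdot (g\cdot z) \;=\; \rho(z)\cdot\bigl(g^{-1}g\cdot z\bigr) \;=\; \rho(z)\cdot z \;=\; \iup(z),
\]
so $\iup(g\cdot z)=\iup(z)$. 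Since $g$ was arbitrary in the domain, $\iup$ is invariant. One must also check the analogous statement for a right action, where the regrouping uses the right-action axiom $(h_1h_2)\cdot z = h_2\cdot(h_1\cdot z)$; the cancellation $g^{-1}g=e$ still produces $\rho(z)\cdot z$ after the appropriate rearrangement, giving the same conclusion.

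The main (and only) subtlety I would flag is making sure $g\cdot z$ lies in the domain $\mathcal{U}$ of the frame so that $\rho(g\cdot z)$ is defined; this is implicit in the phrase ``for $z$ in the domain of the frame'', but a careful statement should restrict to the $G$-invariant open set on which both $z$ and $g\cdot z$ belong to $\mathcal{U}$. Beyond that, there is no real obstacle: the proof is a one-line consequence of the equivariance equation and the action axiom, and illustrates why the pairing of a right-equivariant frame with either a left or a right action is the natural device for manufacturing invariants.
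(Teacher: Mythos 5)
Your left-action computation is correct and is the standard argument (the paper only states this lemma, recalling it from Part I, so there is no in-text proof to compare against). The gap is in the right-action case, which you assert rather than verify, and the assertion as written fails. With the composition rule $(h_1h_2)\cdot z = h_2\cdot(h_1\cdot z)$ and the equivariance $\rho(g\cdot z)=\rho(z)g^{-1}$, the regrouping gives
\[
\bigl(\rho(z)g^{-1}\bigr)\cdot(g\cdot z) \;=\; \bigl(g\,\rho(z)\,g^{-1}\bigr)\cdot z,
\]
and the $g$'s do not cancel: $g\rho(z)g^{-1}\neq\rho(z)$ in general. The identity $\rho(g\cdot z)=\rho(z)g^{-1}$ is the right-equivariance condition adapted to a \emph{left} action; for a right action the frame produced by the normalization equations satisfies $\rho(g\cdot z)=g^{-1}\rho(z)$ instead (solve $(gk)\cdot z=\rho(z)\cdot z$ for $k$ using $(gk)\cdot z=k\cdot(g\cdot z)$), and with that identity the cancellation $\bigl(g\,g^{-1}\rho(z)\bigr)\cdot z=\rho(z)\cdot z$ does go through. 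So either correct the equivariance formula in the right-action branch, or replace the whole computation by the convention-free argument implicit in the paper's setup: the normalization equations force $\rho(w)\cdot w$ to be the unique point of $\mathcal{K}\cap\mathcal{O}(w)$, and since $\mathcal{O}(g\cdot z)=\mathcal{O}(z)$ for either handedness of the action, $\iota(g\cdot z)=\iota(z)$ immediately. Your remark about needing $g\cdot z$ to remain in the domain $\mathcal{U}$ of the frame is correct and worth keeping.
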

\begin{definition} The normalized invariants are the components of $\iup(z)$.\end{definition}
\begin{theorem}[Replacement Rule]\label{reprule} %(Theorem 4.4 in \cite{newpaper}).\quad 
If $F(z)$ is an invariant of the given action $G\times M\rightarrow M$
for a right moving frame $\rho$ on $M$, then $F(z)=F(\iup(z))$.
\end{theorem}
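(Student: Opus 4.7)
The plan is to prove the Replacement Rule by directly exploiting the invariance hypothesis on $F$ together with the defining property of the normalized invariants. The key observation is that invariance of $F$ under the group action means that $F$ takes the same value on every point of an orbit, and that $\iup(z)$ lies on the same orbit as $z$ (indeed it is the unique point of that orbit lying on the cross-section $\mathcal{K}$).

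First I would fix a point $z$ in the domain of the frame $\rho$ and recall that, since $F$ is invariant, the identity $F(g\cdot z)=F(z)$ holds for every group element $g\in G$ for which $g\cdot z$ lies in the relevant domain. In particular, this identity is valid for the specific choice $g=\rho(z)$, which is a well-defined element of $G$ precisely because $z$ lies in the domain of the frame.

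Next I would substitute this choice of $g$ into the invariance equation to obtain $F(\rho(z)\cdot z)=F(z)$, and then apply the definition $\iup(z)=\rho(z)\cdot z$ from the preceding lemma to rewrite the left-hand side as $F(\iup(z))$. This immediately yields $F(z)=F(\iup(z))$, establishing the claim. The argument does not distinguish between left and right group actions in any essential way, because the invariance condition on $F$ is symmetric in that respect; the ``right frame'' hypothesis is only used implicitly in the preceding lemma to guarantee that $\iup(z)$ is itself invariant, a fact which is not even needed for this particular theorem.

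There is essentially no technical obstacle here: the proof is a one-line unfolding of definitions. The only subtlety worth flagging is the domain condition, namely that one must verify $z$ (and hence $\rho(z)\cdot z$) lies in the domain $\mathcal{U}$ on which $F$ is defined and invariant; this is automatic by the hypothesis that $\rho$ is a moving frame on $\mathcal{U}$. The substantive content of the theorem is conceptual rather than computational: it says that \emph{every} invariant of the action can be recovered from the normalized invariants by the trivial substitution $z\mapsto \iup(z)$, which is why the components of $\iup(z)$ generate the algebra of invariants.
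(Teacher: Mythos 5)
Your proof is correct and is exactly the standard argument: since $F$ is invariant, $F(g\cdot z)=F(z)$ for all $g$, and specializing to $g=\rho(z)$ gives $F(\iup(z))=F(\rho(z)\cdot z)=F(z)$. The paper only recalls this theorem from Part I without reproving it, but your one-line unfolding of the definitions is the intended proof, and your remark that the right-frame hypothesis is not essentially needed here is accurate.
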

\begin{definition}[Invariantization Operator]\label{invOpDef} Given a right moving frame $\rho$, the map $z\mapsto \iup(z)=\rho(z)\cdot z$ is called the \emph{invariantization operator}. This operator extends to functions as
$f(z)\mapsto f(\iup(z))$, and $f(\iup(z))$ is called the \emph{invariantization} of $f$.\end{definition}
\noindent If $z$ has components $z^\alpha$, let $\iup(z^\alpha)$ denote the $\alpha^{\mathrm{th}}$ component of $\iup(z)$.}

\bigskip
{
A discrete moving frame is a sequence of moving frames $(\rho_k)$, $k=1,\dots, N$ with a nontrivial intersection of domains which, locally, are uniquely determined by the cross-section $\mathcal{K}=(\mathcal{K}_1,\dots,\mathcal{K}_N)$ to the group orbit through $z\in M^N$, where $M^N$ is the Cartesian product manifold.}
For a right discrete frame, we define the invariants
\begin{equation}\label{invariant}
I_{k,j}:= \rho_k(z)\cdot z_j.
\end{equation}
Suppose that $M$ is $q$-dimensional. Therefore $z_j$ has components $z_j^1,\dots,z_j^q$,  and the $q$ components of $I_{k,j}$ are the invariants
\begin{equation}
I^\alpha_{k,j}:= \rho_k(z)\cdot z^\alpha_j,\qquad \alpha=1,\dots q.
\end{equation}
We will denote the invariantization operator with respect to the frame $\rho_k(z)$ by $\iup_k$,
so that $$I_{k,j}=\iup_k(z_j),\qquad I_{k,j}^\alpha=\iup_k(z_j^\alpha).$$
\begin{definition}[Discrete Maurer--Cartan invariants] \label{KmatDef} Given a right discrete moving frame $\rho$, the \emph{right discrete  Maurer--Cartan group elements} are
\begin{equation}\label{Kk}
K_k=\rho_{k+1}\rho_k^{-1}
\end{equation}
where defined.
\end{definition}
We call the components of the Maurer--Cartan elements the \emph{Maurer--Cartan invariants}.

{A difference moving frame is a natural discrete moving frame that is adapted to difference equations by prolongation conditions.
\begin{theorem}\label{diffMCrecThm}
	Given a right difference moving frame $\rho$, the set of all invariants is generated by the set of components of $K_0=\rho_1\rho_0^{-1}$ and $I_{0,0}=\rho_0(z)\cdot z_0$.
\end{theorem}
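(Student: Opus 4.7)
The plan is to split the argument in two. First, by the Replacement Rule, any invariant can be written as a function of the normalized invariants $I_{k,j}$, so it suffices to show each $I_{k,j}$ lies in the ring generated by the components of $K_0$ and $I_{0,0}$ together with their shifts. Second, I realise each $I_{k,j}$ as a finite product of (shifts of) Maurer--Cartan group elements acting on a (shift of) $I_{0,0}$ via the group action on $M$.

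For the reduction step, fix any index $k$ in the frame sequence. Theorem \ref{reprule} applied to the right frame $\rho_k$ gives $F(z)=F(\iup_k(z))$ for every invariant $F$, so $F$ is a function of the components $I^\alpha_{k,j}$. For the algebraic step, I exploit the feature that distinguishes a \emph{difference} moving frame from a merely discrete one: the frames in the sequence arise from a single one by prolongation, so that $\rho_k(z)=\rho_0(\es_k z)$. This yields the shift identities
\begin{equation*}
K_m=\es_m K_0,\qquad I_{m,m}=\rho_m\cdot z_m=\es_m I_{0,0}.
\end{equation*}
A telescoping product then gives, for $j>k$,
\begin{equation*}
\rho_k\rho_j^{-1}=(\rho_k\rho_{k+1}^{-1})\cdots(\rho_{j-1}\rho_j^{-1})=K_k^{-1}K_{k+1}^{-1}\cdots K_{j-1}^{-1},
\end{equation*}
and the analogue $K_{k-1}K_{k-2}\cdots K_j$ for $j<k$. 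Substituting into
\begin{equation*}
I_{k,j}=\rho_k\cdot z_j=(\rho_k\rho_j^{-1})\cdot(\rho_j\cdot z_j)=(\rho_k\rho_j^{-1})\cdot I_{j,j}
\end{equation*}
expresses $I_{k,j}$ as an explicit product of shifts of $K_0^{\pm1}$ acting, via the group action on $M$, on a shift of $I_{0,0}$.

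The step requiring most care is the prolongation identity $\rho_k(z)=\rho_0(\es_k z)$: this is exactly where the "difference" (as opposed to merely discrete) hypothesis enters, and without it there would be no way to link the diagonal invariants $I_{k,k}$ for different $k$ to $I_{0,0}$ alone. Once that shift-compatibility is extracted from the definition of a difference frame, the remainder of the argument is the telescoping computation above together with a routine separation of cases on the sign of $j-k$.
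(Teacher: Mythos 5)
Your proof is correct and follows essentially the same route as the authors': the Replacement Rule reduces everything to the normalized invariants $I_{k,j}$, and the telescoping factorization of $\rho_k\rho_j^{-1}$ into (inverses of) shifts of $K_0$ acting on $I_{j,j}=\es_j I_{0,0}$ is precisely the computation the paper itself deploys, e.g.\ in the proof of the Lemma of \S 2.2.2 for the invariants $I^x_{k,j;t}$. Your identification of the prolongation property $\rho_k=\es_k\rho_0$ (hence $K_m=\es_m K_0$ and $I_{m,m}=\es_m I_{0,0}$) as the point where the difference-frame hypothesis enters is exactly right.
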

As $K_0$ is invariant, by \eqref{reprule} we have that
\begin{equation}\label{invk0}
K_0=\iup_0(\rho_1),
\end{equation}
where $\iup_0$ denotes invariantization with respect to the frame $\rho_0$.}

\bigskip

{Given any smooth path $t\mapsto z(t)$ in the space $\man =M^N$, consider the induced group action on the path and its tangent. We extend the group action to the dummy variable $t$ trivially, so that $t$ is invariant.
The action is extended to the first-order jet space of $\man$ as follows:
$$g\cdot\ds\frac{{\rm d} z(t)}{{\rm d}t} = \ds\frac{{\rm d} \left(g\cdot z(t)\right)}{{\rm d}t}\,.$$
If the action is free and regular on $\man $, it will remain so on the jet space and we may use the same frame to find the first-order differential invariants
\begin{equation}\label{FOdiffinvariant}
I_{k,j;\,t}(t):=\rho_k(z(t))\cdot \ds\frac{{\rm d}  z_j(t)}{{\rm d}t}\,.
\end{equation}
\begin{definition}[Curvature Matrix]\label{Nndefn} The curvature matrix $N_k$ is given by
 \begin{equation}\label{curvMatDef}
N_k=\left(\ds\frac{{\rm d}}{{\rm d}t}\, \rho_k\right) \rho_k^{-1}
\end{equation}
when $\rho_k$ is in matrix form.
\end{definition}
It can be seen that for a right frame,  $N_k$ is an invariant matrix that involves the first order differential invariants.
The above derivation applies to all discrete moving frames. For a difference frame the follow syzygy holds
\begin{equation}\label{DiffKeqnN}\ds\frac{{\rm d}}{{\rm d}t} K_0=(\es N_0) K_0-K_0N_0.\end{equation}
As $N_0$ is invariant, from \eqref{reprule} we have that
\begin{equation}\label{invn0}
N_0=\iup_0\!\left(\ds\frac{{\rm d}}{{\rm d}t}\, \rho_0\right).
\end{equation}}
%where $\iup_0$ denotes invariantization with respect to the frame $\rho_0$.

\bigskip

{
In order to calculate the invariantized variation of the Euler--Lagrange equations, we may use the differential--difference syzygy
\begin{equation}\label{mainDDSyz}
\ds\frac{{\rm d}}{{\rm d}t} \bkappa = \mathcal{H}\bom,
\end{equation}
where $\bkappa$ is a vector of generating  invariants, $\mathcal{H}$ is a linear difference operator with coefficients 
that are functions of $\kappa$ and its shifts, and $\bom$ is a vector of generating first order differential invariants
of the form \eqref{FOdiffinvariant}.
Note that \eqref{mainDDSyz} comes from rearranging components in  \eqref{DiffKeqnN}.
\begin{definition}\label{adopdef} Given a linear difference operator $\mathcal{H}=c_j \es_j$,  the adjoint operator $\mathcal{H}^*$ is defined by
$$\mathcal{H}^*(F)=\es_{-j}(c_j F)$$
and the associated boundary term $A_{\mathcal{H}}$ is defined by $$  F\mathcal{H}(G)-\mathcal{H}^*(F)G = (\es-\id)(A_{\mathcal{H}}(F,G)),$$
for all appropriate expressions $F$ and $G$.
\end{definition}
Now suppose we are given a group action $G\times {M}\rightarrow {M}$ and that we have found a difference frame for this action. 
\begin{theorem}[Invariant Euler--Lagrange Equations]\label{mainThmELeqns} ({Theorem 5.2} in \cite{newpaper}).\quad 
Let $\mathcal{L}$ be a Lagrangian functional whose invariant Lagrangian is given in terms of the generating invariants as
$$\mathcal{L}=\sum L(n,\bkappa_0,\dots, \bkappa_{J_1}),$$
and suppose that the differential--difference syzygies are
$$ \ds\frac{{\rm d} \bkappa}{{\rm d}t} = \mathcal{H}\bom.$$
Then (with $\cdot$ denoting the sum over all components)
\begin{equation}\label{ELinvresult}
\eubu(\upL)\cdot \mbu_0'=\big(\mathcal{H}^*\eubka(L)\big)\cdot\bom,
\end{equation}
where $\eubka(L)$ is the difference Euler operator with respect to $\bkappa$.
Consequently, the invariantization of the original Euler--Lagrange equations is
\begin{equation}\label{invEL}
\iup_0\big(\eubu(\upL)\big)=\mathcal{H}^*\eubka(L).
\end{equation}
\end{theorem}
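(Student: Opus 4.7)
\emph{Proof plan.} The plan is to derive \eqref{ELinvresult} by computing $\frac{\mathrm{d}\mathcal{L}}{\mathrm{d}t}$ in two coordinate systems and equating them. First I would fix a smooth one-parameter family $t\mapsto \mbu(t)$, so that the induced variations satisfy $\mbu_j'=\es_j\mbu_0'$ and $\bkappa_j'=\es_j\bkappa_0'$ by commutativity of $\es$ with $\frac{\mathrm{d}}{\mathrm{d}t}$. Applying the chain rule in $\mbu$-coordinates gives
\[ \frac{\mathrm{d}L}{\mathrm{d}t} = \sum_j \frac{\partial L}{\partial \mbu_j}\cdot\es_j\mbu_0', \]
and an analogous formula in $\bkappa$-coordinates holds. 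These are two representations of the same object, so they agree.

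The second step is to perform summation by parts in each expression. Moving $\es_j$ off the variation produces the Euler operator plus a total difference $(\es-\id)(\cdot)$, giving
\[ \frac{\mathrm{d}L}{\mathrm{d}t} = \eubu(L)\cdot \mbu_0' + (\es-\id)(A_1) \quad\text{and}\quad \frac{\mathrm{d}L}{\mathrm{d}t} = \eubka(L)\cdot \bkappa_0' + (\es-\id)(A_2). \]
Equating the two, substituting the syzygy $\bkappa_0'=\mathcal{H}\bom$, and applying Definition \ref{adopdef} with $F=\eubka(L)$ and $G=\bom$ to move $\mathcal{H}$ onto $\eubka(L)$ yields
\[ \eubu(L)\cdot \mbu_0' = \bigl(\mathcal{H}^*\eubka(L)\bigr)\cdot \bom + (\es-\id)(A_3), \]
which, after summing over $n$ (equivalently, restricting to variations with compact support), is \eqref{ELinvresult}.

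For \eqref{invEL} I would apply the invariantization operator $\iup_0$ to both sides of \eqref{ELinvresult}. The right-hand side is already built out of the invariants $\bkappa$ and $\bom$, hence is fixed by $\iup_0$ by Theorem \ref{reprule}. On the left, $\iup_0(\eubu(L)\cdot\mbu_0')=\iup_0(\eubu(L))\cdot\iup_0(\mbu_0')$, and $\iup_0(\mbu_0')$ lies among the generating first-order differential invariants of the form \eqref{FOdiffinvariant} that constitute $\bom$. Since the variation $\mbu_0'$ is arbitrary and the passage $\mbu_0'\mapsto\iup_0(\mbu_0')$ is a linear frame-change, matching the coefficients of $\bom$ on each side produces $\iup_0(\eubu(L))=\mathcal{H}^*\eubka(L)$.

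The main obstacle in making this plan rigorous is the careful book-keeping of the three boundary contributions $A_1$, $A_2$ and $A_{\mathcal{H}}$, assembling them into a single total difference $A_3$ without tacit hypotheses on the support of $\mbu_0'$, and showing the operator identities respect the dependence of $\mathcal{H}$ on the shifts of $\bkappa$. A secondary point requiring care is the final coefficient-matching step: one must know that the components of $\bom$ form an independent generating set of first-order differential invariants, so that equality of the two invariant linear expressions in $\bom$ forces equality of their coefficient vectors.
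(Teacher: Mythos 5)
Your plan is correct and is essentially the argument behind Theorem 5.2 of Part I (note that the present paper only quotes this result and cites Part I for its proof): compute $\mathrm{d}\mathcal{L}/\mathrm{d}t$ in both the original and the invariant variables, sum by parts in each, substitute the syzygy, and use the adjoint identity of Definition \ref{adopdef}. The only step you leave implicit is the passage from the summed identity to the pointwise identity \eqref{ELinvresult}: since neither $\eubu(\upL)\cdot\mbu_0'$ nor $\big(\mathcal{H}^*\eubka(L)\big)\cdot\bom$ involves shifts of $\mbu_0'$ (each $\sigma^\alpha$ at site $n$ is an invertible pointwise-linear function of $\mbu_0'$ at site $n$), the fundamental lemma applied to compactly supported variations forces equality at each $n$, and the same invertibility is exactly what justifies the coefficient matching in $\bom$ that you flag as needed for \eqref{invEL}.
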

Consequently, the original Euler--Lagrange equations, in invariant form, are equivalent to
\[
\mathcal{H}^*\eubka(L)=0.
\]
\begin{theorem}\label{thmi} (Theorem 7.1 in \cite{newpaper}) Suppose that the conditions of Theorem  \ref{mainThmELeqns} hold. Write
	\[
	 {A}_{\mathcal{H}}=\mathcal{C}^{j}_{\alpha}\es_j(\sigma^{\alpha}),
	\]
where each $\mathcal{C}^{j}_{\alpha}$ depends only on $n, \bkappa$ and its shifts.
Let $\Phi^{\alpha}(\mbu_0)$ be the row of the matrix of characteristics corresponding to the dependent variable $u^{\alpha}_0$ and denote its invariantization by $\Phi^{\alpha}_0(I)=\Phi^{\alpha}(\rho_0\cdot\mbu_0)$. 
Then the $R$ conservation laws in row vector form amount to
\begin{equation}\label{conlawsfinala}
\mathcal{C}_{\alpha}^{j}\es_j\{\Phi^{\alpha}_0(I)\!\:\myAd\left(\rho_{0}\right)\}=0.
\end{equation}
That is, to obtain the conservation laws, it is sufficient to make the replacement
\begin{equation}\label{InvAdrepEqn}
\sigma^\alpha \mapsto \{\Phi^{\alpha}(g\cdot \mbu_0)\!\:\myAd(g)\} \big\vert_{g=\rho_0}.
\end{equation}
in $A_{\mathcal{H}}$.
\end{theorem}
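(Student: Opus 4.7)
The plan is to establish \eqref{conlawsfinala} by combining the classical difference Noether identity with the invariantization machinery of Part I, paying careful attention to how the characteristics of the symmetry vector fields transform under the moving frame.

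I would start from the standard difference Noether identity: for each one-parameter subgroup of the symmetry group, with characteristic $\Phi^\alpha$ associated to the variable $u^\alpha_0$, one has
\[
\sum_\alpha \Phi^\alpha \,\mathrm{E}_{u^\alpha}(\upL) = (\es - \id) P_\Phi
\]
for an appropriate Noether current $P_\Phi$. Since the group has dimension $R$, packaging $\Phi^\alpha$ as a row of the matrix of characteristics (indexed by a basis of the Lie algebra) collects the $R$ identities into one matrix-valued equation.

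Next, combining Theorem \ref{mainThmELeqns} with the definition of $A_{\mathcal{H}}$ applied to $F=\eubka(L)$ and $G=\bom$ yields
\[
\eubka(L) \cdot \mathcal{H}(\bom) - \mathcal{H}^*(\eubka(L)) \cdot \bom = (\es-\id)\, A_{\mathcal{H}}(\eubka(L),\bom).
\]
Using the syzygy $\upd\bkappa/\upd t = \mathcal{H}\bom$ together with \eqref{ELinvresult}, the Noether current on solutions of the Euler--Lagrange equations is identified with $A_{\mathcal{H}}$, written in the form $\mathcal{C}^{j}_{\alpha}\es_j(\sigma^{\alpha})$ where the formal slots $\sigma^\alpha$ stand in for the components of $\bom$.

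The main step, which is also the main obstacle, is to justify the replacement \eqref{InvAdrepEqn}. The subtlety is that $\bom$ is expressed in the invariantized Lie-algebra basis attached to the frame $\rho_0$, while the symmetry characteristic $\Phi^\alpha(\mbu_0)$ is written in the standard basis at $\mbu_0$. Under $\mbu_0 \mapsto g\cdot\mbu_0$, the basis of infinitesimal generators undergoes a change of basis governed by $\myAd(g)$, because the infinitesimal action of $X\in\mathfrak{g}$ at the point $g\cdot\mbu_0$ corresponds, after pulling back along the action of $g$, to the infinitesimal action of $\myAd(g^{-1})X$ at $\mbu_0$. Consequently, the characteristic $\Phi^\alpha$ read in the frame-adjusted basis equals $\Phi^\alpha(g\cdot\mbu_0)\,\myAd(g)$, and setting $g=\rho_0$ performs the invariantization. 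Substituting this expression for $\sigma^\alpha$ throughout $A_{\mathcal{H}}$ and invoking the Noether identity in invariantized form then produces \eqref{conlawsfinala}, which decomposes along a basis of $\mathfrak{g}$ to give the $R$ independent conservation laws.
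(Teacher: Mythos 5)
Your proposal is correct and takes essentially the route of the original proof: this theorem is only quoted in the present paper from Part I, and the argument there is exactly the one you sketch, namely identifying the Noether current on solutions with $A_{\mathcal{H}}$ evaluated in the group-orbit direction and then using the equivariance of the infinitesimal generators to show that, in that direction, $\sigma^\alpha$ becomes $\Phi^{\alpha}(g\cdot \mbu_0)\!\:\myAd(g)\big\vert_{g=\rho_0}$. The only detail left implicit in your write-up, which is worth a sentence, is that the shifted terms $\es_j(\sigma^\alpha)$ transform by the same computation carried out with $\rho_j$ and $\mbu_j$, which is why the replacement commutes with $\es_j$ and yields $\es_j\{\Phi^{\alpha}_0(I)\!\:\myAd(\rho_0)\}$ as in \eqref{conlawsfinala}.
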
}

%%%%%%%%%%%%%%%%%%%%%%%%%%%%%%%%%%

\section{The linear action of  $SL(2)$ in the plane}

We  consider the  action of  $SL(2)$ on the prolongation space $P_n^{(0,0)}(\mathbb{R}^2)$, which has coordinates $(x_0,y_0)$. This action is given by 
%\[
%SL(2)= \left\{\left(\begin{array}{cc} a & b \\ c & d \end{array} \right)  \colon a,b,c,d \in \mathbb{R}, \ ad-bc=1 \right\}
%\]
%and the linear action acting on points in the plane $(x_j,y_j)$
%\[
%\gamma_n(t)=(u(n,t),v(n,t))=(x_0,y_0)
%\]
%given by 
\begin{equation}\label{sl2}
\left(\begin{array}{c} x_0 \\  y_0  \end{array} \right) \mapsto \left(\begin{array}{cc} a &b \\ c & d \end{array} \right) \left(\begin{array}{c} x_0\\  y_0  \end{array} \right) = \left(\begin{array}{c} \widetilde{x_0} \\  \widetilde{y_0}  \end{array} \right), \qquad ad-bc=1.
\end{equation}
%for $k=1,...,N$.
\subsection{The Adjoint action}
For our calculations we need the adjoint representation of  $SL(2)$ relative to this group action. The infinitesimal vector fields are
\[
{\bf v}_a=x \partial_{x} - y \partial_{y}, \quad {\bf v}_b=y \partial_{x}, \quad {\bf v}_c=x \partial_{y}.
\]
We have that the induced action on these are
%\[
%\begin{aligned}
%&\widetilde{v_1}=(ax_0+by_0)(d\partial_{x_0}-c\partial_{y_0}) -(cx_0 + dy_0)(-b\partial_{x_0} + a\partial_{y_0})=\\&\ \ \ \ =(ad+bc)(x_0\partial_{x_0}-y_0\partial_{y_0}) +2bd_n\partial_{x_0}-2acx_0\partial_{y_0},\\
%&\widetilde{v_2}=(cx_0 +dy_0)(d\partial_{x_0}-c\partial_{y_0})=cd(x_0\partial_{x_0}-y_0\partial_{y_0})-c^{2}x_0\partial_{y_0}+d^{2}y_0\partial_{x_0},\\
%&\widetilde{v_3}=(ax_0 +by_0)(-b\partial_{x_0}+a\partial_{y_0})=-ab(x_0\partial_{x_0}-y_0\partial_{y_0})+a^{2}x_0\partial_{y_0}-b^{2}y_0\partial_{x_0}.
%\end{aligned}
%\]
%Therefore, with respect to the basis $\{v_1,v_2,v_3\}$ we have
\[
\left(\begin{array}{ccc} \widetilde{{\bf v}_a} &\widetilde{{\bf v}_b} & \widetilde{{\bf v}_c} \end{array}\right) =   \left(\begin{array}{ccc} {\bf v}_a & {\bf v}_b & {\bf v}_c \end{array}\right){\myAd(g)}^{-1}
\]
where
\begin{equation}\label{sl2adj}
\myAd(g)= \begin{blockarray}{cccc} & a & b & c \\ \begin{block}{c(ccc)} a & ad+bc & -ac & bd\\ b & -2ab & a^{2} & -b^{2}\\ c & 2cd&-c^{2}&d^{2}\\\end{block}\end{blockarray} \ .
\end{equation}

\subsection{The discrete frame, the generating invariants and their syzygies}
Taking the normalisation equations $\widetilde{x}_{0}=1$, $\widetilde{x}_{1}=\widetilde{y}_{0}=0$ and solving for $a,b$ and $c$, we define the moving frame
\[
\rho_{0}(x_0,y_0,x_1,y_1)=
\left(
\begin{array}{cc}
\displaystyle\ds\frac{y_1}{\tau}& -\displaystyle\ds\frac{x_1}{\tau}\\[10pt]
-y_{0} & x_{0}
\end{array}
\right)\in SL(2)
\]
where we have set $\tau = x_0y_1-x_1y_0$. Then $\rho_k = \text{S}_k \rho_0$ gives the discrete moving frame $(\rho_k)$.

\subsubsection{The generating discrete invariants}
%We define the discrete moving frame to be $$\rho_k=S^{r}\rho_0.$$
The Maurer--Cartan matrix is
\begin{equation}\label{MaurerCartan}
K_0={{\iota_0}}(\rho_1)=\left(\begin{array}{cc} \kappa  &\ds\frac{1}{\tau}\\[10pt] -\tau & 0 \end{array}\right)
\end{equation}
where  we have set  $\kappa = \displaystyle\ds\frac{x_0 y_{2} - x_{2} y_0}{x_1 y_{2} - x_{2} y_1}$. Note that $\tau=\iota_0(y_1)$ and $\kappa=\ds\frac{{{\iota_0}}(y_2)}{\text{S}({{\iota_0}}(y_1))}$ are invariant, by the equivariance of the frame.

By the general theory of discrete moving frames, the algebra of invariants is generated by 
$\tau$, $ \kappa$ and their shifts.

\subsubsection{The generating differential invariants}
We now consider $x_j=x_j(t)$, $y_j=y_j(t)$ and we define some first order differential invariants by setting
\begin{equation}\label{fodi}
I^x_{k,j;t}(t)\coloneqq \rho_k \cdot x^{\prime}_j \qquad \text{and} \qquad I^y_{k,j;t}(t) \coloneqq \rho_k \cdot y^{\prime}_j,
\end{equation}
where $x^{\prime}_j=\ddt x_j(t)$ and $y^{\prime}_j=\ddt y_j(t)$. We set the notation
%We will show that these may all be written in terms of 
\begin{equation}\label{easenot}
\sigma^x \coloneqq I^x_{0,0;t}(t) \qquad \text{and} \qquad\sigma^y \coloneqq I^y_{0,0;t}(t).
\end{equation}

\begin{lemma}
For all $k$, $j$, both $I^x_{k,j;t}(t)$ and $I^y_{k,j;t}(t)$ may be written in terms of $\sigma^x$, $\sigma^y$, $\kappa$, $\tau$ and their shifts.
\end{lemma}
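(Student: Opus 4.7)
The strategy exploits the linearity of the $SL(2)$-action~\eqref{sl2}: since each $\rho_k$ acts via the $2\times 2$ matrix representation, differentiation with respect to $t$ commutes with the action and
\begin{equation*}
\begin{pmatrix} I^x_{k,j;t}\\ I^y_{k,j;t}\end{pmatrix}
 = \rho_k\begin{pmatrix} x'_j \\ y'_j \end{pmatrix}.
\end{equation*}
The plan is to insert $\rho_j^{-1}\rho_j$ on the right, use equivariance to rewrite the first factor as a (telescoping) product of shifted Maurer--Cartan matrices, and use the definition of $\sigma^x,\sigma^y$ to recognise the second factor.

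For the second factor, evaluating~\eqref{easenot} in matrix form gives $(\sigma^x,\sigma^y)^T = \rho_0(x'_0, y'_0)^T$, hence $(x'_0, y'_0)^T = \rho_0^{-1}(\sigma^x,\sigma^y)^T$. Applying the shift $\es_j$ and using $\rho_j=\es_j\rho_0$ yields $(x'_j, y'_j)^T = \rho_j^{-1}(\es_j\sigma^x, \es_j\sigma^y)^T$, so
\begin{equation*}
\begin{pmatrix} I^x_{k,j;t}\\ I^y_{k,j;t}\end{pmatrix} = (\rho_k\rho_j^{-1})\begin{pmatrix} \es_j\sigma^x \\ \es_j\sigma^y \end{pmatrix}.
\end{equation*}
For the first factor, iterating $K_k = \rho_{k+1}\rho_k^{-1}$ from Definition~\ref{KmatDef} gives the telescoping identity
\begin{equation*}
\rho_k\rho_j^{-1} = \begin{cases}
K_{k-1}K_{k-2}\cdots K_j, & k>j,\\
\id, & k=j,\\
K_k^{-1}K_{k+1}^{-1}\cdots K_{j-1}^{-1}, & k<j,
\end{cases}
\end{equation*}
with each $K_i = \es_i K_0$. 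The entries of $K_0$ (and of $K_0^{-1}$, which exists since $K_0\in SL(2)$) lie in $\mathbb{Z}[\kappa,\tau^{\pm 1}]$ by~\eqref{MaurerCartan}, so $\rho_k\rho_j^{-1}$ has entries polynomial in shifts of $\kappa$ and $\tau^{\pm 1}$.

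Combining the two steps proves the lemma. I do not anticipate any genuine obstacle: the argument is simply equivariance plus the telescoping of Maurer--Cartan matrices. The only care required is in tracking shifts consistently and in using that $\tau$ is invertible on the domain of the frame, which is automatic since the defining normalisations $\widetilde{x}_0=1$, $\widetilde{x}_1=\widetilde{y}_0=0$ for $\rho_0$ force $\tau = x_0y_1-x_1y_0\neq 0$.
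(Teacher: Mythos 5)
Your proposal is correct and follows essentially the same route as the paper: identify $I^x_{j,j;t}=\es_j\sigma^x$, $I^y_{j,j;t}=\es_j\sigma^y$, then telescope $\rho_k\rho_j^{-1}$ into a product of shifted Maurer--Cartan matrices whose entries are functions of $\kappa$, $\tau$ and their shifts. Your treatment is slightly more careful on the $k<j$ case and on invertibility (the paper's displayed product in fact has a small index slip, writing $\es_k K_0$ where $\es_{k-1}K_0$ is meant, which your $K_{k-1}\cdots K_j$ gets right), but the argument is the same.
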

\begin{proof}
First note that $I^x_{j,j;t}(t)=\text{S}_j\sigma^x$, $I^y_{j,j;t}(t)=\text{S}_j\sigma^y$.
We have next for each $k>j$ that 
$$I^x_{k,j;t}(t)=\rho_k \cdot x^{\prime}_j={\rho_k \rho_{k-1}^{-1}\rho_{k-1}\rho_{k-2}^{-1}\rho_{k-2}\cdots \rho_{j}^{-1} \rho_{j}\cdot x^{\prime}_j}=(\text{S}_kK_0)\cdots (\text{S}_jK_{0}) \text{S}_j\sigma^x$$
while similar calculations hold for for $k<j$ and for $I^y_{k,j;t}(t)$. 
\end{proof}
For our calculations, we need to know $I^x_{0,2;t}(t)$,  $I^x_{0,1;t}(t)$ and $I^y_{0,1;t}(t)$ explicitly. We have
\begin{equation}\label{relation1}
\begin{array}{rcl}
\left(\begin{array}{c}I^x_{0,1;t}(t)\\ I^y_{0,1;t}(t)\end{array}\right)&=&\rho_0\left(\begin{array}{c}x_1^\prime\\ y_1^\prime\end{array}\right)\\
&=&\rho_0\rho_{1}^{-1}\rho_1\left(\begin{array}{c}x_1^\prime\\ y_1^\prime\end{array}\right)\\[12pt]
&=&K_0^{-1} \left(\begin{array}{c}\text{S}\sigma^x\\ \text{S}\sigma^y\end{array}\right)\\[12pt]
&=& \left(\begin{array}{c} -\displaystyle\ds\frac{\text{S}\sigma^y}{\tau}\\[10pt] \tau\text{S}\sigma^x +\kappa\text{S}\sigma^y\end{array}\right)
\end{array}
\end{equation}
while a similar calculation yields, setting $\tau_j=\text{S}_j\tau$ and $\kappa_j=\text{S}_j\kappa$,
\begin{equation}\label{relation2}
\begin{array}{rcl}
\left(\begin{array}{c}I^x_{0,2;t}(t)\\ I^y_{0,2;t}(t)\end{array}\right)&=&K_0^{-1} (\text{S}K_0^{-1}) \left(\begin{array}{c} \text{S}_2\sigma^x\\\text{S}_2\sigma^y\end{array}\right)\\[12pt]
&=&\left(\begin{array}{cc} -\displaystyle\ds\frac{\tau_1}{\tau} &  -\displaystyle\ds\frac{\kappa_1}{\tau}\\[10pt]
\kappa\tau_1 & \kappa\kappa_1 -\displaystyle\ds\frac{\tau}{\tau_1}\end{array}\right)\left(\begin{array}{c} \text{S}_2\sigma^x\\\text{S}_2\sigma^y\end{array}\right).
\end{array}
\end{equation}

We now define
\begin{equation}\label{CMex1}
N_0={\iota_0}\left(\ds\frac{\text{d}}{\text{d}t}\rho_{0}\right)=\left(
\begin{array}{cc}
-\sigma^x & -\ds\frac{I^x_{0,1;t}(t)}{\tau}\\[9pt]
-\sigma^y & \sigma^x\\
\end{array}\right)
 \in \mathfrak{sl}(2).
\end{equation}
From \eqref{DiffKeqnN}
we may calculate the differential-difference syzygy.
 Equating components in \eqref{DiffKeqnN} and simplifying we obtain
 \begin{equation}\label{taudelta}
 \begin{array}{lcl}
 \ds\frac{\text{d}}{\text{d}t}\kappa&=&\kappa(\mbox{id} -\text{S})\sigma^x+\left(\scalebox{0.9}{\mbox{$\ds\frac1{\tau} -\ds\frac{\tau}{\tau_1^2}$}}\text{S}_2\right)\sigma^y,\\[11pt]
 \ds\frac{\text{d}}{\text{d}t}\tau&=&\tau(\text{S}+\text{id})\sigma^x+\kappa\text{S}\sigma^y
 \end{array}
 \end{equation}
 so that
 \[
\ds\frac{\text{d}}{\text{d}t}\left(
\begin{array}{c}
\kappa\\
\tau\\
\end{array}\right)
=
\mathcal{H}\left(
\begin{array}{c}
\sigma^x\\ \sigma^y
\end{array}\right)
\]
where
 \begin{equation}\label{taudelta2}
 \mathcal{H} =
 \left(\begin{array}{cc}
 \kappa (\mbox{id}-\text{S}) & \scalebox{0.9}{\mbox{$\ds\frac1{\tau} -\ds\frac{\tau}{\tau_1^2}$}} \text{S}_2
 \\[11pt]
 \tau(\mbox{id}+\text{S}) &\kappa\text{S}
 \end{array}\right).
 %\left(\begin{array}{c} \sigma^x\\ \sigma^y\end{array}\right).
 \end{equation}

 \subsection{The Euler--Lagrange equations and conservation laws}

We are now in a position to obtain the Euler--Lagrange equations and conservation laws for a Lagrangian of the form

\[
\mathcal{L}[x,y]=\sum L(\tau,\tau_1,\dots \tau_{J_1}, \kappa,\kappa_1,\dots, \kappa_{J_2}).
\]

Using Theorem \ref{mainThmELeqns}, we have that the Euler--Lagrange system is $0=\mathcal{H}^* \left( \text{E}_{\kappa}(L)\ \text{E}_{\tau}(L)\right)^T$
which is written explicitly as
\begin{equation}\label{ELex1}
\begin{array}{rcl}
0&=& \left(\mbox{id}-\text{S}_{-1}\right) \kappa\text{E}_{\kappa}(L) + \left( \mbox{id}+\text{S}_{-1}\right)  \tau  \text{E}_{\tau}(L),\\[12pt]
0&=& -\text{S}_{-2}\left(\displaystyle\ds\frac{\tau}{\tau_1^2} \text{E}_{\kappa}(L)\right) +\ds\frac1{\tau}\text{E}_{\kappa}(L)+\text{S}_{-1} \left(\kappa\text{E}_{\tau}(L)\right).
\end{array}
\end{equation}

{ We recall that if $\mathcal{H}=\sum_{k=0}^m c_k \text{S}_k$ then $\mathcal{H}^*=\sum_{k=0}^m (\text{S}_{-k}c_k)\text{S}_{-k}$. Further, we recall the formula
 \[ F\mathcal{H}(G)-\mathcal{H}^*(F)G = (\text{S}-\mbox{id}) A_{\mathcal{H}}\left(F,G\right)\]
 where
 \[A_{\mathcal{H}}\left(F,G\right) = \sum_{k=1}^m \left(\sum_{j=0}^{k-1} \text{S}_j \right)\left( \text{S}_{-k}\left(c_kF\right) G\right)\]
 and where the identity
 \[
(\text{S}_k-\mbox{id})=(\text{S}-\mbox{id}) \sum_{j=0}^{k-1} \text{S}_j
 \]
 has been used.
}

To obtain the conservation laws we need only the boundary terms arising from $\text{E}(L)\mathcal{H}\left( \sigma^x\ \sigma^y\right)^T-\mathcal{H}^*(\text{E}(L))\left( \sigma^x\ \sigma^y\right)^T $, which we record here. They are
$(\text{S}-\mbox{id})A_{\mathcal{H}}$ where 
\begin{equation}\label{eg1Ah} \begin{array}{rcl}A_{\mathcal{H}} &=& \mathcal{C}^x_0\sigma^x + \mathcal{C}^y_0\sigma^y + \mathcal{C}^y_1\text{S}\sigma^y\\[10pt]
&=& \left[-\text{S}_{-1}\left(\kappa \text{E}_{\kappa}(L)\right) + \text{S}_{-1}\left(\tau \text{E}_{\tau}(L)\right)\right]\sigma^x\\[8pt]
&& \ +\left[ \text{S}_{-1}\left(\kappa\text{E}_{\tau}(L)\right) -\text{S}_{-2}\left(\scalebox{.95}{\mbox{$\ds\frac{\tau}{\tau_1^2}$}} \text{E}_{\kappa}(L)\right)\right]\sigma^y \\[10pt]
&&\ -\text{S}_{-1}\left(\scalebox{.95}{\mbox{$\ds\frac{\tau}{\tau_1^2}$}} \text{E}_{\kappa}(L)\right) \text{S}\sigma^y,
\end{array}\end{equation}
where this defines $\mathcal{C}^x_0$, $\mathcal{C}^y_0$ and $\mathcal{C}^y_1$.

To find the conservation laws from $A_{\mathcal{H}}$, we first calculate the invariantized form of the matrix of infinitesimals restricted to the variables $x_0$ and $y_0$ 
\[
\Phi_0(I)=
%\begin{blockarray}{ccc}
% & x_0 & y_0 \\
%\begin{block}{c(cc)}
  %a & \tilde{x}_0 & -\tilde{y}_0\\
  %b & \tilde{y}_0 & 0  \\
  %c & 0 & \tilde{x}_0 \\
  %\end{block}
%\end{blockarray}\Bigg\vert_{\rho_0}=
\begin{blockarray}{cccc}
 & a & b & c \\
\begin{block}{c(ccc)}
  x_0 & 1 & 0 & 0\\
  y_0 & 0 & 0 & 1\\
  \end{block}
\end{blockarray}
\]
and then the replacement required by Theorem \ref{thmi} is given by
\[
\text{S}_k\sigma^x \mapsto \ \left( \begin {array}{ccc} 1&0&0\end{array}\right) \text{S}_k \myAd(\rho_0)
\]
and
\[
\text{S}_k\sigma^y \mapsto \left( \begin {array}{ccc} 0&0&1\end{array}\right) \text{S}_k \myAd(\rho_0).
\]
Since $\text{S}\myAd(\rho_0)=\myAd\left(K_0\right)\myAd(\rho_0)$, 
after collecting terms and simplifying we obtain the Noether's Conservation Laws in the form

\begin{equation}
\label{NCLs}\begin{array}{rcl} {\bf k} &=& \left[ \mathcal{C}^x_0\left( \begin {array}{ccc} 1 & 0 & 0\end{array}\right) + \mathcal{C}^y_0\left( \begin {array}{ccc} 0 & 0 & 1\end{array}\right)  +\mathcal{C}^y_1 \left( \begin {array}{ccc} 0 & 0 & 1\end{array}\right)\myAd(K_0)  \right]\myAd(\rho_0)\\[12pt] &=&V(I)\myAd(\rho_0)\end{array}
\end{equation}
where
\[
\myAd(\rho_0)=\left(\begin{array}{ccc} \ds\frac{x_0y_1 +x_1y_0}{\tau} & \ds\frac{y_0y_1}{\tau} & -\ds\frac{x_0x_1}{\tau} \\[10pt]
 2\ds\frac{x_1y_1}{\tau^2} &  \ds\frac{y_1^2}{\tau^2}  &  -\ds\frac{x_1^2}{\tau^2} \\[10pt]
- 2x_0y_0 & -y_0^2 & x_0^2 
  \end{array}\right)
\]
and
\[
\myAd(K_0)=\left(\begin{array}{ccc} -1 & \kappa\tau & 0\\[10pt] -2\ds\frac{\kappa}{\tau}&\kappa^2& -\ds\frac{1}{\tau^2}\\[10pt] 0 & -\tau^2 & 0\end{array}\right)
\]
and where $\mathcal{C}^x_0$,  $\mathcal{C}^y_0$ and $\mathcal{C}^y_1$ are defined in Equation (\ref{eg1Ah}), the vector  ${\bf k}=(k_1,k_2,k_3)$ is a vector of constants and where this equation defines $V(I)=(V_0^1,V_0^2,V_0^3)^T$.
Explicitly, the vector of invariants $V(I)$ is of the form
 \[
 V(I)={\rm S}_{-1}\left(\begin{array}{ccc} \tau \text{E}_{\tau}(L)-\kappa \text{E}_{\kappa}(L) &  \text{E}_{\kappa}(L) &  \kappa\text{E}_{\tau}(L) - {\rm S}_{-1}\left( \ds\frac{\tau}{\tau_1^2} \text{E}_{\kappa}(L)\right)  \end{array}\right).
 \]
We note that once the Euler--Lagrange equations have been solved for the sequences $(\kappa_k)$ and
$(\tau_k)$, then $ V(I) $ is known, so that (\ref{NCLs}) can be considered as an algebraic equation for $\rho_0$. This will be the focus of the next section.  

%\[
%V(I)=\left(\begin{array}{c}  \left( \ds\frac{\partial L}{\partial \text{S}(\kappa)}\right)\kappa+\text{S}_{-1}(\tauE^{\tau}_{\tau}(L))-\text{S}_{-1}(\kappaE^{\kappa}_{\tau}(L)) + \left( \ds\frac{\partial L}{\partial \text{S}(\kappa)}\right)\kappa\\ -\kappa^2\tau \left( \ds\frac{\partial L}{\partial \text{S}(\kappa)}\right) + \tau^2 \left( \text{S}_{-1} \left( \ds\frac{\partial L}{\partial \text{S}(\kappa)}\right)\ds\frac{\tau}{{\text{S}(\tau)}^2}+\text{S}_{-1}(\alpha^{12}_2E^{\kappa}_{\tau}(L))\right) \\  \left( \ds\frac{\partial L}{\partial \text{S}(\kappa)}\right)\ds\frac{1}{\tau}+\text{S}_{-1}(\kappaE^{\tau}_{\tau}(L))-\text{S}_{-2}(\ds\frac{\tau}{{\text{S}(\tau)}^2}E^{\kappa}_{\tau}(L)) \end{array}\right)
%\]

Recall that from $(\text{S}-\mbox{id})(V(I)\myAd(\rho_0))=0$ we obtain the discrete Euler--Lagrange equations in the form ${\rm S}V(I)\myAd(\rho_1\rho_0^{-1})=V(I)$ which yields the equations
\begin{equation}\label{vAdeqs}
\left(\begin{array}{ccc} V^1_0 & V^2_0 & V^3_0\end{array}\right) = 
\left(\begin{array}{ccc} V^1_1 & V^2_1 & V^3_1\end{array}\right)\left(\begin{array}{ccc} -1 & \kappa\tau & 0\\[9pt] -2\ds\frac{\kappa}{\tau}&\kappa^2& -\ds\frac{1}{\tau^2}\\[12pt] 0 & -\tau^2 & 0\end{array}\right).
\end{equation}
%We note in particular that \begin{equation}\label{v2eq}V^2_1 = -\tau^2 V^3_0\end{equation} which will be used in the sequel.  

\subsection{The general solution}
 
 If we can solve for the discrete frame $(\rho_k)$ we then have  from the general theory that
 \[
\left(\begin{array}{c} x_k \\ y_k \end{array}\right)=\rho_k^{-1} \left(\begin{array}{c} 1 \\ 0 \end{array}\right)= \left(\begin{array}{cc} d_k & -b_k \\ -c_k & a_k \end{array} \right)\left(\begin{array}{c} 1 \\ 0 \end{array}\right)=\left(\begin{array}{c} d_k \\ -c_k \end{array}\right)
\]
since the normalisation equations for $\rho_k$ are $\rho_k\cdot ({x}_k, {y}_k)^T=(1,0)^T$.

\begin{theorem}
Given a solution $(\kappa_k)$, $(\tau_k)$ to the Euler--Lagrange equations, so that the vector of invariants ${\rm S}_kV(I)=(V^1_k, V^2_k, V^3_k)^T$ appearing in the conservation laws
are known and satisfy $V^2_k\ne0$ for all $k$, (\ref{NCLs}), and that with the three constants ${\bf k}=(k_1, k_2, k_3)^T$ satisfying $k_3(k_1^2+4k_2k_3)\ne 0$ are given, 
then the general solution to the Euler--Lagrange equations, in terms of $(x_k, y_k)$ is
\[
\left(\begin{array}{c} x_k \\ y_k \end{array}\right)=\left(\begin{array}{cc} 0 & 1 \\ -1 & 0 \end{array} \right)Q\left(\begin{array}{cc }\prod_{l=0}^k \zeta_l \lambda_{1,l} & 0 \\ 0 & \prod_{l=0}^jk \zeta_l \lambda_{2,l}\end{array} \right)
Q^{-1}\left(\begin{array}{c} c_0 \\ d_0 \end{array}\right)
\]
where here, $c_0$ and $d_0$ are two further arbitrary constants of integration,
\begin{equation}\label{Q}
Q=\left( \begin{array}{cc} k_1 - \sqrt{k_1^2+4k_2k_3} &  k_1 + \sqrt{k_1^2+4k_2k_3} \\ 2k_3 & 2k_3 \end{array} \right),
\end{equation}
and where
\begin{equation}\label{lambdaZetaEq}
\lambda_{1,l}=V_l^1-\sqrt{k_1^2+4k_2k_3} ,\qquad \lambda_{2,l}=V_l^1+\sqrt{k_1^2+4k_2k_3},\qquad \zeta_l = -\ds\frac{\tau_l}{2 V^2_l}.
\end{equation}
\end{theorem}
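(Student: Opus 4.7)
The plan is to reduce the problem to a second-order scalar linear recurrence satisfied by both coordinates $(x_k)$ and $(y_k)$, exhibit $\prod_{l=0}^{k-1}\zeta_l\lambda_{i,l}$ as two independent basis solutions, and then rearrange the resulting two-parameter family into the matrix form claimed by the theorem.

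First, starting from $\rho_{k+1}=K_k\rho_k$ and the normalisation $\rho_k(x_k,y_k)^T=(1,0)^T$, I would expand $\rho_{k+1}^{-1}=\rho_k^{-1}K_k^{-1}$ column by column. Using $K_k^{-1}e_1=\tau_k e_2$ and $K_k^{-1}e_2=(-1/\tau_k,\kappa_k)^T$, the two columns $(x_k,y_k)^T$ and $(-b_k,a_k)^T$ of $\rho_k^{-1}$ satisfy a coupled first-order recursion; eliminating the auxiliary pair $(a_k,b_k)$ yields, for $u\in\{x,y\}$, the scalar recurrence
\[
\tau_k u_{k+2}=\tau_{k+1}(\kappa_k u_{k+1}-u_k).
\]

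Next, define $A_k:=\prod_{l=0}^{k-1}\zeta_l\lambda_{1,l}$ and $B_k:=\prod_{l=0}^{k-1}\zeta_l\lambda_{2,l}$, with empty products equal to $1$. Dividing the recurrence by $A_k$ and using $A_{k+1}/A_k=\zeta_k\lambda_{1,k}$ reduces the assertion that $A$ is a solution to a single algebraic identity. Substituting $\zeta_l=-\tau_l/(2V^2_l)$ and applying the Euler--Lagrange recurrences extracted from \eqref{vAdeqs}, namely
\[
V^2_{k+1}=-V^3_k\tau_k^2,\qquad V^1_{k+1}=-V^1_k+2V^3_k\tau_k\kappa_k,
\]
together with the Casimir invariance $(V^1_k)^2+4V^2_k V^3_k=k_1^2+4k_2 k_3=\mu^2$, the identity collapses to $0=0$. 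The verification for $B_k$ is the same calculation with $\mu\mapsto-\mu$. Since $\mu\neq 0$ the two solutions have distinct ratios and are linearly independent, so they span the two-dimensional solution space of the recurrence.

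Finally, I would expand $JQD_kQ^{-1}(c_0,d_0)^T$ by setting $(v_1,v_2)^T=Q^{-1}(c_0,d_0)^T$ and using the explicit form \eqref{Q} of $Q$ to obtain
\[
JQD_kQ^{-1}\begin{pmatrix}c_0\\ d_0\end{pmatrix}=\begin{pmatrix}2k_3(v_1A_k+v_2B_k)\\ -(k_1-\mu)v_1A_k-(k_1+\mu)v_2B_k\end{pmatrix}.
\]
Both components are linear combinations of $A_k$ and $B_k$, hence solutions of the recurrence. Evaluating at $k=0$ reproduces $(x_0,y_0)=(d_0,-c_0)$ after using the identities $2k_3(v_1+v_2)=d_0$ and $(k_1-\mu)v_1+(k_1+\mu)v_2=c_0$, which are direct consequences of the form of $Q^{-1}$ and the condition $k_3(k_1^2+4k_2k_3)\ne 0$. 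The pair $(c_0,d_0)$ is in bijection with the residual freedom in $\rho_0$ once the Noether constants $\mathbf{k}$ are prescribed, so the formula exhausts the general solution. The main obstacle is the algebraic identity for $A_k$: it collapses only once both Euler--Lagrange recurrences and the Casimir relation are combined in exactly the right order; the rest of the argument is bookkeeping around this core calculation.
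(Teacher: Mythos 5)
Your route is genuinely different from the paper's: the paper solves the conservation laws ${\bf k}\,\myAd(\rho_0)^{-1}=V(I)$ together with $\det\rho_0=1$ by a Gr\"obner basis, obtaining the linear relations (\ref{GBSL2:3})--(\ref{GBSL2:4}) for $(a_0,b_0)$ in terms of $(c_0,d_0)$, back-substitutes into $c_1=-\tau a_0$, $d_1=-\tau b_0$ to get the \emph{first-order} matrix recurrence $\underline{c_1}=\zeta_0X_0\underline{c_0}$, and diagonalises. You instead eliminate $(a_k,b_k)$ first to get the frame-only second-order scalar recurrence $\tau_ku_{k+2}=\tau_{k+1}(\kappa_ku_{k+1}-u_k)$ and exhibit $A_k=\prod\zeta_l\lambda_{1,l}$, $B_k=\prod\zeta_l\lambda_{2,l}$ as basis solutions. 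That central computation checks out: using $V^2_{k+1}=-\tau_k^2V^3_k$, $V^1_{k+1}=-V^1_k+2\kappa_k\tau_kV^3_k$ (from (\ref{vAdeqs})) and $(V^1_k)^2+4V^2_kV^3_k=\mu^2$, the identity $\tau_k\zeta_k\lambda_{1,k}\zeta_{k+1}\lambda_{1,k+1}=\tau_{k+1}(\kappa_k\zeta_k\lambda_{1,k}-1)$ does collapse via $(V^1_k-\mu)(-V^1_k-\mu)=4V^2_kV^3_k$.

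However, there is a genuine gap at the end. Your recurrence is second order, so each component is determined by \emph{two} initial values, and you only verify the formula at $k=0$. The proposed expression $JQD_kQ^{-1}(c_0,d_0)^T$ is certainly a solution of the recurrence agreeing with $(x_0,y_0)=(d_0,-c_0)$, but so is a one-parameter family of other solutions in each component; matching at $k=0$ alone does not identify it with the actual $(x_k,y_k)$. The missing condition is at $k=1$: one needs $QD_1Q^{-1}(c_0,d_0)^T=\zeta_0X_0(c_0,d_0)^T$ to equal $(c_1,d_1)^T=-\tau_0(a_0,b_0)^T$, and unwinding this is \emph{exactly} the statement of (\ref{GBSL2:3})--(\ref{GBSL2:4}), i.e.\ $2a_0V_0^2=(k_1+V_0^1)c_0+2k_2d_0$ and $2b_0V_0^2=2k_3c_0+(V_0^1-k_1)d_0$. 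These relations are the only place the Noether constants actually constrain the frame (beyond fixing $\mu$), and they must be \emph{derived} from ${\bf k}=V(I)\myAd(\rho_0)$ --- this is the content of the paper's Gr\"obner-basis step, which your proof never performs. Your closing remark that ``$(c_0,d_0)$ is in bijection with the residual freedom in $\rho_0$'' is a dimension count, not a verification that the formula selects the correct two-dimensional subfamily of the four-dimensional solution space of the scalar recurrence. Relatedly, the ``Casimir invariance'' $(V^1_k)^2+4V^2_kV^3_k=k_1^2+4k_2k_3$ is asserted rather than proved; it too comes from the conservation-law equations (it is (\ref{GBSL2:1})), though it can alternatively be checked directly from the $V$-recurrences plus one evaluation. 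Supply the solution of the conservation-law system for $(a_0,b_0)$ and the $k=1$ check, and your argument closes.
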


\begin{proof}
If we set
\begin{equation}\label{mfparameters}
\rho_0=\left(\begin{array}{cc} a_0 & b_0 \\ c_0 & d_0 \end{array} \right),\qquad a_0d_0-b_0c_0=1
\end{equation}
and write $\eqref{NCLs}$ in the form ${\bf k}{\myAd(\rho_0)}^{-1}=V(I)$ as three equations for $\{a_0,b_0,c_0,d_0\}$, we obtain
\begin{equation}\label{equatparameters}
\begin{aligned}
(a_0d_0+b_0c_0)k_1+2b_0d_0k_2-2a_0c_0k_3&=V_0^1,\\
c_0d_0k_1+d_0^2k_2-c_0^2k_2&=V_0^2,\\
-a_0b_0k_1-b_0^2k_2+a_0^2k_3&=V_0^3.
\end{aligned}
\end{equation}
Computing a Gr\"{o}ebner basis associated to these equations, together with the equation $a_0d_0-b_0c_0=1$, using   the lexicographic
ordering $k_3 < k_2 < k_1 < c_0 < b_0 < a_0$, % \verb plex(a,b,c,k_1,k_2,k_3)   
we obtain 
\begin{subequations}\label{GBSL2}
\begin{align}
 k_1^2+4k_2k_3-{(V_0^1)}^2-4V_0^2V_0^3&=0,\label{GBSL2:1}\\
 k_3c_0^2-k_1c_0d_0-k_2d_0^2+V_0^2&=0,\label{GBSL2:2}\\
 2b_0V_0^2-2c_0k_3+(k_1-V_0^1)d_0&=0,\label{GBSL2:3}\\
 2a_0V_0^2-c_0(k_1+V_0^1)-2k_2d_0&=0.\label{GBSL2:4}
\end{align}
\end{subequations}
We note that (\ref{GBSL2:1}) is a first integral of the Euler--Lagrange equations, (\ref{GBSL2:2}) is a conic equation for $(c_0,d_0)$ while (\ref{GBSL2:3}) and (\ref{GBSL2:4}) are linear for $(a_0,b_0)$ in terms of $(c_0,d_0)$.

We have
\[
\rho_1=\left(\begin{array}{cc} \kappa & \ds\frac{1}{\tau} \\[9pt] -\tau & 0 \end{array}\right) \rho_0
\]
where $\rho_1=\text{S}\rho_0$.
%\[
%\rho_1=\left(\begin{array}{cc} a_1 & b_1 \\ c_1 & d_1 \end{array} \right), \quad \text{and} \quad \rho_0=\left(\begin{array}{cc} a_0 & b_0 \\ c_0 & d_0 \end{array} \right).
%\]
Hence
\[
c_{1}=-\tau a_{0}, \quad \text{and} \quad d_{1}=-\tau b_{0}.
\]
Back-substituting for $a_0$ and $b_0$ from (\ref{GBSL2:3}) and (\ref{GBSL2:4}) yields, assuming $V_0^2\neq0$
\begin{equation}\label{equat5}
\left(\begin{array}{c} c_1 \\ d_1 \end{array}\right)=\ds\frac{-\tau}{2V_0^2}\left(V_0^1{\left( \begin{array}{cc} 1 & 0 \\ 0 & 1\end{array}\right) }+ \left( \begin{array}{cc} {k_1}& 2k_2 \\ 2k_3 & - {k_1}\end{array}\right)\right)\left(\begin{array}{c} c_0 \\ d_0 \end{array}\right).
\end{equation}

Now, setting
\[
\underline{c_0}=\left(\begin{array}{c} c_0 \\ d_0 \end{array}\right), \ \underline{c_1}=\left(\begin{array}{c} c_1 \\ d_1 \end{array}\right), \  \zeta_0=\ds\frac{-\tau}{2V_0^2} \quad \text{and} \quad X_0=V_0^1{\left( \begin{array}{cc} 1 & 0 \\ 0 & 1\end{array}\right)} + 
\left( \begin{array}{cc} {k_1} & 2k_2 \\ 2k_3 & - {k_1}\end{array}\right)
\]
equation \eqref{equat5} can be written as
\begin{equation}\label{equat7}
\underline{c_1}=\zeta_0X_0\underline{c_0}.
\end{equation}
Diagonalising $X_0$ we obtain $\Lambda_0$ diagonal such that
\[
\Lambda_0=Q^{-1}X_0Q=\left(\begin{array}{cc}\lambda^1_0 & 0 \\ 0 & \lambda^2_0  \end{array}\right)
\]
where
\[
\lambda^1_0=V_0^1-\sqrt{k_1^2+4k_2k_3} \quad \text{and} \quad \lambda^2_0=V_0^1+\sqrt{k_1^2+4k_2k_3}
\]
and
\begin{equation}\label{Q1}
Q=\left( \begin{array}{cc} k_1 - \sqrt{k_1^2+4k_2k_3} &  k_1 + \sqrt{k_1^2+4k_2k_3} \\ 2k_3 & 2k_3 \end{array} \right).
\end{equation}
Since $Q$ is a constant matrix, it is now simple to solve the recurrence relation.
From \eqref{equat7}, supposing $k_3\sqrt{k_1^2+4k_2k_3}\neq0$ so $Q^{-1}$ exists, we obtain
%\[
%Y^{-1}\underline{c_1}=\zeta_0\left(Y^{-1}X_0Y\right)Y\underline{c_0}=\zeta_0\Lambda_0Y^{-1}c_0.
%\]
%Setting $\widehat{\underline{c_1}}=Y^{-1}\underline{c_1}$ and $\widehat{\underline{c_0}}=Y^{-1}\underline{c_0}$ we have $\widehat{\underline{c_1}}=\zeta_0\Lambda_0\widehat{\underline{c_0}}$, i.e,
%\[
%\left( \begin{array}{c} \widehat{{c_1}} \\ \widehat{{d_1}}\end{array}\right)=\left( \begin{array}{cc} \zeta_0 \lambda_{1,0} & 0 \\ 0 & \zeta_0 \lambda_{2,0}  \end{array} \right)\left( \begin{array}{c} \widehat{{c_0}} \\ \widehat{{d_0}}\end{array}\right).
%\]
%Then
%\[
%\begin{array}{lcl}
%\widehat{{c_{n+1}}}&=&\prod_{j=0}^n \zeta_j \lambda_{1,j} \widehat{{c_0}},\\
%\widehat{{d_{n+1}}}&=&\prod_{j=0}^n \zeta_j \lambda_{2,j} \widehat{{d_0}}.\\
%\end{array}
%\]
%i.e,
%\[
%\widehat{\underline{c_{r+1}}}=\left(\begin{array}{cc }\prod_{j=0}^r \zeta_j \lambda_{1,j} & 0 \\ 0 & \prod_{j=0}^r \zeta_j \lambda_{2,j}\end{array} \right)\widehat{\underline{c_{0}}}
%\]
%and therefore
\[
\underline{c_{k+1}}=Q\left(\begin{array}{cc }\prod_{l=0}^k \zeta_l \lambda_{1,l} & 0 \\ 0 & \prod_{l=0}^k \zeta_l \lambda_{2,l}\end{array} \right)Q^{-1}\underline{c_{0}}
\]
where here, $\underline{c_{0}}$ is the initial data. Finally from the normalisation equations
\[
\begin{array}{rcl}
\left(\begin{array}{c} x_k \\ y_k \end{array}\right)&=&\rho_k^{-1} \left(\begin{array}{c} 1 \\ 0 \end{array}\right)\\[12pt]&=& \left(\begin{array}{cc} d_k & -b_k \\ -c_k & a_k \end{array} \right)\left(\begin{array}{c} 1 \\ 0 \end{array}\right)\\[12pt]&=&\left(\begin{array}{c} d_k \\ -c_k \end{array}\right) 
\\[12pt]&=&\left(\begin{array}{cc} 0 & 1 \\ -1 & 0 \end{array} \right)\left(\begin{array}{c} c_k \\ d_k \end{array}\right)
\end{array}
\]
and the result follows as required. %therefore
%\[
%\left(\begin{array}{c} x_n \\ y_n \end{array}\right)=\left(\begin{array}{cc} 0 & 1 \\ -1 & 0 \end{array} \right)Y\left(\begin{array}{cc }\prod_{j=0}^n \zeta_j \lambda_{1,j} & 0 \\ 0 & \prod_{j=0}^n \zeta_j \lambda_{2,j}\end{array} \right)Y^{-1}\left(\begin{array}{c} c_0 \\ d_0 \end{array}\right)
%\]
%where $Y$ is given by \eqref{Y}, as required.
\end{proof}

\begin{remark} The proof of the Theorem makes no use of Equation (\ref{GBSL2:2}). Here we note that it is consistent with  the second component of (\ref{vAdeqs}). We have that (\ref{GBSL2:2}) can be written as
\[
\left(\begin{array}{cc} c_0 & d_0 \end{array} \right) \left(\begin{array}{cc} k_3 & -\ds\frac{k_1}{2}\\  -\ds\frac{k_1}{2} & k_2 \end{array} \right) \left(\begin{array}{cc} c_0 \\ d_0 \end{array} \right) = -V_0^2
\]
and therefore
\begin{equation}\label{equat6}
\left(\begin{array}{cc} c_1 & d_1 \end{array} \right) \left(\begin{array}{cc} k_3 & -\ds\frac{k_1}{2}\\  -\ds\frac{k_1}{2} & k_2 \end{array} \right) \left(\begin{array}{cc} c_1 \\ d_1 \end{array} \right) = -V^2_{1}.
\end{equation}
Substituting \eqref{equat6} into \eqref{equat5} yields after simplification the equation 
$$V_1^2=-\tau^2V_0^3$$ as claimed.
\end{remark}

\section{The $SA(2)=SL(2) \ltimes {\mathbb{R}}^{2}$ linear action}
We write the general element of the equi-affine  group, $SA(2)=SL(2) \ltimes {\mathbb{R}}^{2}$, as $(g,\alpha,\beta)$ where $g\in SL(2)$ as in Equation (\ref{typSL2}), and $\alpha$, $\beta\in\mathbb{R}$.
We then consider the equi-affine group action on $P_n^{(0,0)}(\mathbb{R}^2)$ with coordinates $(x_0,y_0)$ given by
$$ (g,\alpha,\beta)\cdot (x_0,y_0)= (\widetilde{x}_0,\widetilde{y}_0)= (ax_0 +by_0+\alpha, cx_0 +dy_0+\beta), \qquad ad-bc=1.$$
The standard representation of this group is given by
$$(g,\alpha,\beta)\mapsto \left(\begin{array}{cccc} a& b & \alpha\\ c&d&\beta\\0&0&1\end{array}\right).$$
\subsection{The Adjoint action}
The infinitesimals vector fields are of the form
\[
{\bf v}_a=x \partial_{x} - y \partial_{y}, \quad {\bf v}_b=y \partial_{x}, \quad {\bf v}_c=x \partial_{y}, \quad {\bf v}_{\alpha}= \partial_{x}, \quad {\bf v}_{\beta}= \partial_{y}.
\]
We have that the induced action on these vector fields is
\[
\left(\begin{array}{ccccc} \widetilde{{\bf v}_a} & \widetilde{{\bf v}_b} & \widetilde{{\bf v}_c} & \widetilde{{\bf v}_{\alpha}} & \widetilde{{\bf v}_{\beta}} \end{array}\right) =  \left(\begin{array}{ccccc} {\bf v}_a & {\bf v}_b & {\bf v}_c & {\bf v}_{\alpha} & {\bf v}_{\beta} \end{array}\right) {\myAd(g,\alpha, \beta)}^{-1}
\]
where
{\begin{equation}\label{SAadj}
{\myAd(g,\alpha, \beta)}=\begin{blockarray}{cccccc} &a & b & c& {\alpha}& {\beta} \\ \begin{block}{c(ccccc)} a & ad+bc&-ac&bd&0&0\\
 b & -2ab&{a}^{2}&-{b}^{2}&0&0\\
  c & 2cd&-{c}^{2}&{d}^{2}&0&0\\
   {\alpha} & -\alpha(ad+bc) + 2ab \beta &a(c\alpha-a\beta) &b(b\beta-d\alpha)&a&b\\ {\beta} & \beta(ad+bc) - 2cd \alpha &c(c\alpha-a\beta) &d(b\beta-d\alpha)&c&d
\\\end{block}\end{blockarray} \ .
\end{equation}}

%\begin{blockarray}{cccc} & {\bf v}_a & {\bf v}_b & {\bf v}_c \\ \begin{block}{c(ccc)} {\bf v}_a & ad+bc & 2bd &-2ac\\ {\bf v}_b & cd& d^{2} & -c^{2}\\ {\bf v}_c & -ab&-b^{2}&a^{2}\\\end{block}\end{blockarray}.

{\begin{remark}
We note that \eqref{SAadj} can be written as
%\begin{equation}\label{adgk}
%\myAd(g,\alpha, \beta)= \left(\begin {array}{cc} \myAd(g) & \alpha X_1+\beta X_2 \\ 0 & g^{-T}
%\end {array}
%\right)
%\end{equation}
%where 
%\[
%X_1=\left(\begin{array}{cc} d & -c \\ 0 & 0 \\ -b & a \end{array}\right)=\left(\begin{array}{cc} 1 & 0 \\ 0 & 0 \\ 0 & 1 \end{array}\right)g^{-T}, \quad X_2=\left(\begin{array}{cc} b & -a \\ d & -c \\ 0 & 0 \end{array}\right)=\left(\begin{array}{cc} 0 & -1 \\ 1 & 0 \\ 0 & 0 %\end{array}\right)g^{-T},
%\]
%and $g\in SL(2)$.
%Therefore
\begin{equation}\label{adgk2}
{\myAd(g,\alpha,\beta)}= \left(\begin {array}{cc} \mbox{Id}_3 & 0 \\[15pt] \alpha\left(\begin{array}{ccc} -1 & 0 & 0 \\ 0 & 0 & -1 \end{array}\right)+\beta\left(\begin{array}{ccc} 0 &- 1 & 0 \\ 1 & 0 & 0 \end{array}\right) & \mbox{Id}_2
\end {array}
\right) \left(\begin {array}{cc} {\myAd(g)} &0\\ 0 & g
\end {array}\right)
\end{equation}
where $\mbox{Id}_2$ and $\mbox{Id}_3$ are the $2\times 2$ and $3\times 3$ identity matrices respectively.
\end{remark}
}

\subsection{The discrete frame, the generating invariants and their syzygies}
We define a moving frame $\rho_0$ given by the normalisation equations 
\[
(g,\alpha,\beta)\cdot (x_0, y_0)=(0,0),\quad (g,\alpha,\beta)\cdot (x_{1},y_{1})=(1,0), \quad (g,\alpha,\beta)\cdot (x_{2},y_{2})=(0,*)
\]
where $*$ is to be left free. Solving for the group parameters $a$, $b$, $c$, $d$, $\alpha$ and $\beta$ we obtain
the following standard matrix representation of the moving frame
$$\rho_0= \left( \begin {array}{ccc} {\ds\frac {y_{{2}}-y_{{0}}}{\kappa}}&{\ds\frac {x_{{0}}-x_{
{2}}}{\kappa
}}&{\ds\frac {x_{{2}}y_{{0}}-x_{{0}}y_{{2}}}{\kappa}}\\ \noalign{\medskip}y_{{0}}-y
_{{1}}&x_{{1}}-x_{{0}}&x_{{0}}y_{{1}}-x_{{1}}y_{{0}}
\\ \noalign{\medskip}0&0&1\end {array} \right) 
,$$
where
\[
\kappa = \left( y_{{1}}-y_{{2}} \right) x_{{0}}+\left( y_{{2}} -y_{{0}} \right) x_{{1}}+ \left( y_{{0}}-y_{{1}} \right) x_{{2}}
\]
is an invariant. Indeed,
$$\kappa=\rho_0\cdot y_{2}.$$

We define the discrete moving frame to be $(\rho_k)$ where $$\rho_k=\text{S}_k\rho_0.$$
The Maurer--Cartan matrix in the standard representation is
\begin{equation}\label{CMex1b}
 K_0={\iota_0}(\rho_1)=\left( \begin {array}{ccc} \tau&\ds\frac{1+\tau}{\kappa}& -\tau \\ \noalign{\medskip}-\kappa&-1&
\kappa \\ \noalign{\medskip}0&0&1\end {array} \right)
\end{equation}
where $\kappa=\rho_0\cdot y_2$ is given above, and 
\[
\tau= \ds\frac{x_0(y_1-y_3) +x_1(y_3-y_0)+x_3(y_0-y_1)}{x_1(y_2-y_3)+x_2(y_3-y_1)+x_3(y_1-y_2)}= \ds\frac{\rho_0\cdot y_{3}}{\kappa_1}  \]
where we have used the Replacement Rule, Theorem \ref{reprule}, and where $\kappa_k=\text{S}_k\kappa$.
By the general theory of discrete moving frames the algebra of invariants is generated by $\tau$, $\kappa$ and their shifts. We note that one could take $\rho_0\cdot y_{3}$ and $\kappa$ to be the generators,
but the resulting formulae in the sequel are no simpler.

%The first order differential invariants are defined to be
%$$I^x_{0,0;t}(t)=\rho_0\cdot x_{0,t},\qquad I^y_{0,1;t}(t)=\rho_0\cdot y_{0,t}$$
We obtain
%the same way as \eqref{fodi} and \eqref{easenot} and we have
\begin{equation}\label{CMex2b}
N_0={\iota_0}\left(\ds\frac{\text{d}}{\text{d}t}\rho_{0}\right)=  \left( \begin {array}{ccc} \sigma^x-I^x_{0,1;t}(t)&{\ds\frac {\sigma^x-I^x_{0,2;t}(t)}{\kappa}}&-\sigma^x
\\ \noalign{\medskip}\sigma^y-I^y_{0,1;t}(t)
&I^x_{0,1;t}(t)-\sigma^x&-\sigma^y\\ \noalign{\medskip}0&0&0\end {array} \right)
 \end{equation}
 where we have set $\sigma^x \coloneqq I^x_{0,0;t}(t)$ and $\sigma^y \coloneqq I^y_{0,0;t}(t)$.

To obtain $\rho_0\cdot x^{\prime}_j=I^x_{0,j;t}(t)$, $\rho_0\cdot y^{\prime}_j=I^y_{0,j;t}(t)$, $j=1,2$ in terms of $\sigma^x$, $\sigma^y$, $\tau$, $\kappa$ and their shifts, we have, since the translation part of the group plays no role in the action
on the derivatives,
\[\left(\begin{array}{c} \rho_0\cdot x_1^\prime\\\rho_0\cdot y_1^\prime\\ 0\end{array}\right)=\rho_0 \left(\begin{array}{c} x_1^\prime\\y_1^\prime\\ 0\end{array}\right) = \rho_0  \rho_1^{-1} \rho_1 \left(\begin{array}{c} x_1^\prime\\y_1^\prime\\ 0\end{array}\right)=K_0^{-1} \left(\begin{array}{c} \text{S}\sigma^x\\\text{S}\sigma^y\\ 0\end{array}\right)\]
and similarly
\[\rho_0 \left(\begin{array}{c} x_2^\prime\\y_2^\prime\\ 0\end{array}\right) =K_0^{-1} (\text{S}K_0^{-1}) \left(\begin{array}{c} \text{S}_2\sigma^x\\\text{S}_2\sigma^y\\ 0\end{array}\right).\]

Finally from \eqref{DiffKeqnN}
and the relations above, we have the differential-difference syzygy
 \begin{equation}\label{RR3} 
\displaystyle\ds\frac{{\rm d}}{{\rm d}t} \left(\begin{array}{c} \tau\\ \kappa \end{array}\right) = \mathcal{H}\left(\begin{array}{c} \sigma^x \\ \sigma^y\end{array}\right),
\end{equation}
where $\mathcal{H}$ is a difference operator depending only on the generating difference invariants $\tau$, $\kappa$ and their shifts,  and which has the form
\[
\mathcal{H}=\left(\begin{array}{cc} \mathcal{H}_{11} & \mathcal{H}_{12} \\ \mathcal{H}_{21} & \mathcal{H}_{22}\end{array}\right)
\]
where 
\begin{equation}\label{HopsSA2}
\begin{array}{l}
\mathcal{H}_{11}=-\tau+\left(1+\ds\frac{\kappa}{\kappa_1}(1+\tau)\right)\text{S} +\tau \text{S}_2 -\ds\frac{\kappa}{\kappa_1^2}\left[\kappa_2(1+\tau_1)-\kappa_1\right]\text{S}_3,\\[11pt]
\mathcal{H}_{12}=   -\ds\frac{1+\tau}{\kappa} + \ds\frac{\tau(1+\tau_1)}{\kappa_1} \text{S}_2 -\ds\frac{\kappa}{\kappa_1^2\kappa_2}\left[\kappa_2\tau_2(1+\tau_1)-\kappa_1(1+\tau_2)\right]\text{S}_3,\\[11pt]
\mathcal{H}_{21}=-\kappa -\kappa\text{S}+(\tau\kappa_1-\kappa)\text{S}_2,\\[11pt]
\mathcal{H}_{22}=-1-(1+\tau)\text{S} +\left(\tau\tau_1-\ds\frac{\kappa(1+\tau_1)}{\kappa_1}\right)\text{S}_2
.\\
\end{array}
\end{equation}
\subsection{The Euler--Lagrange equations and the conservation laws.}

We consider a Lagrangian of the form $ L(\tau,\dots ,\tau_{J_1}, \kappa,\dots, \kappa_{J_2} )$. Then by {Theorem \ref{mainThmELeqns}}
we have that the Euler--Lagrange equations are
\begin{equation}\label{ASL2ELinv}\begin{array}{rcl}
 0&=& \mathcal{H}_{11}^* \text{E}_{\tau}(L) + \mathcal{H}_{21}^* \text{E}_{\kappa}(L),\\
 0&=& \mathcal{H}_{12}^* \text{E}_{\tau}(L) + \mathcal{H}_{22}^* \text{E}_{\kappa}(L)
 \end{array}
 \end{equation}
 where the $\mathcal{H}_{ij}$ are given in Equation (\ref{HopsSA2}).
 
The boundary terms contributing to the conservation laws are
 \begin{equation}\label{AhSA2}\begin{aligned} A_{\mathcal{H}}&=A_{\mathcal{H}_{11}}(\text{E}_{\tau}(L),\sigma^x)+A_{\mathcal{H}_{21}}(\text{E}_{\kappa}(L),\sigma^x)+\\&+A_{\mathcal{H}_{12}}(\text{E}_{\tau}(L),\sigma^y)+A_{\mathcal{H}_{22}}(\text{E}_{\kappa}(L),\sigma^y)\\
 &= \sum_{k=0}^2 \mathcal{C}^x_k \text{S}_k\sigma^x +  \mathcal{C}^y_k \text{S}_k\sigma^y\\
 \end{aligned}\end{equation}
 where this defines the {$\mathcal{C}^x_k$, $\mathcal{C}^y_k$}.
 {Explicitly 
 \begin{equation}\label{coeffSA}
 \begin{aligned}
 \mathcal{C}^x_0=& {\rm S}_{-1}(1+\ds\frac{\kappa}{\kappa_1}(1+\tau)\text{E}_{\tau}(L)) +{\rm S}_{-2}(\tau\text{E}_{\tau}(L))+{\rm S}_{-3}\left(-\ds\frac{\kappa}{\kappa_1^2}(\kappa_2(1+\tau_1-\kappa_1)\right)\text{E}_{\tau}(L))\\
& \quad + {\rm S}_{-1}(-\kappa\text{E}_{\kappa}(L))+{\rm S}_{-2}((\tau\kappa_1-\kappa)\text{E}_{\kappa}(L)),\\
\mathcal{C}^x_1=& {\rm S}_{-1}(\tau\text{E}_{\tau}(L))+{\rm S}_{-2}\left(-\ds\frac{\kappa}{\kappa_1^2}(\kappa_2(1+\tau_1-\kappa_1)\right)\text{E}_{\tau}(L))+{\rm S}_{-1}((\tau\kappa_1-\kappa)\text{E}_{\kappa}(L)),\\
\mathcal{C}^x_2=&  {\rm S}_{-1}\left(-\ds\frac{\kappa}{\kappa_1^2}(\kappa_2(1+\tau_1-\kappa_1)\right)\text{E}_{\tau}(L)), \\
\mathcal{C}^y_0=&  {\rm S}_{-2}(\ds\frac{\tau(1+\tau_1)}{\kappa_1}\text{E}_{\tau}(L)) + {\rm S}_{-3}(-\ds\frac{\kappa}{\kappa_1^2\kappa_2}(\kappa_2\tau_2(1-\tau_1)-\kappa_1(1+\tau_2))\text{E}_{\tau}(L)) \\
& \quad -{\rm S}_{-1}(1+\tau)\text{E}_{\kappa}(L)) + {\rm S}_{-2}(\tau\tau_1 -\ds\frac{\kappa(1-\tau_1)}{\kappa_1}\text{E}_{\kappa}(L)),\\
\mathcal{C}^y_1=&  {\rm S}_{-1}(\ds\frac{\tau(1+\tau_1)}{\kappa_1}\text{E}_{\tau}(L)) + {\rm S}_{-2}(-\ds\frac{\kappa}{\kappa_1^2\kappa_2}(\kappa_2\tau_2(1-\tau_1)-\kappa_1(1+\tau_2))\text{E}_{\tau}(L)) \\
& \quad + {\rm S}_{-1}(\tau\tau_1 -\ds\frac{\kappa(1-\tau_1)}{\kappa_1}\text{E}_{\kappa}(L)),\\
\mathcal{C}^y_2=& {\rm S}_{-1}(-\ds\frac{\kappa}{\kappa_1^2\kappa_2}(\kappa_2\tau_2(1-\tau_1)-\kappa_1(1+\tau_2))\text{E}_{\tau}(L)).\\
 \end{aligned}
 \end{equation}}

To obtain the conservation laws we need the invariantized form of the matrix of infinitesimals restricted to the variables $x_0$ and $y_0$ 
\[
\Phi_0(I)=\begin{blockarray}{cccccc}
 & a & b & c & \alpha & \beta \\
\begin{block}{c(ccccc)}
  x_0 & 0&0&0&1&0\\
  y_0 & 0&0&0&0&1\\
  \end{block}
\end{blockarray}
\]
and then the replacements required to obtain the conservation laws from $A_{\mathcal{H}}$ are
\[
\text{S}_k\sigma^x \mapsto \left( \begin {array}{ccccc} 0&0&0&1& 0\end{array}\right)\text{S}_k \myAd(\rho_0),\qquad \text{S}_k\sigma^y \mapsto \left( \begin {array}{ccccc} 0&0&0&0&1\end{array}\right)\text{S}_k \myAd(\rho_0).
\]

Hence, the conservation laws are given by $(\text{S}-\text{id})A=0$ where
\begin{equation}\label{ASL2CLs}
\begin{aligned}
A&=\Big[\left( \begin {array}{ccccc} 0&0&0&1&0\end{array}\right)( \mathcal{C}^{x}_0 +\mathcal{C}^{x}_1\myAd(K_0)+\mathcal{C}^{x}_2\myAd(K_0(\text{S}K_0))) \\
&+ \left( \begin {array}{ccccc} 0&0&0&0&1\end{array}\right)( \mathcal{C}^{y}_0 +\mathcal{C}^{y}_1\myAd(K_0)+\mathcal{C}^{y}_2\myAd((\text{S}K_0)K_0)) \Big]\myAd(\rho_0)\\
&=V(I)\myAd(\rho_0)
\end{aligned}
\end{equation}
{where
\[
{\myAd(K_0})= \left(\begin {array}{cc} \mbox{Id}_3 & 0 \\[15pt] \left(\begin{array}{ccc} \tau & -\kappa & 0 \\ \kappa & 0 &  -\tau \end{array} \right)& \mbox{Id}_2
\end {array}
\right) \left(\begin {array}{ccccc} -2\tau & \tau \kappa & -\ds\frac{1+\tau}{\kappa} &0 &0 \\[11pt]
\ds\frac{-2\tau(1+\tau)}{\kappa} & \tau^2 & -\ds\frac{{(1+\tau)}^2}{\kappa^2} & 0 & 0\\[11pt]
2\kappa & -\kappa^2 & 1 & 0 & 0 \\[11pt]
0 & 0 & 0 & \tau & \ds\frac{1+\tau}{\kappa} \\[11pt]
0 & 0 & 0 & -\kappa & -1
\end {array}\right)
\]}
and where this defines the vector of invariants,  $V(I)=\left( V^1_0, V^2_0, V^3_0, V^4_0, V^5_0\right)^T$ and where the $\mathcal{C}^x_j$, $\mathcal{C}^y_j$ are defined in Equation (\ref{AhSA2})  and \eqref{coeffSA}. 

We can thus write the conservation laws in the form
\begin{equation}\label{noetherSA2}
 {\bf k} = V(I)\myAd(\rho_0)
\end{equation}
where ${\bf k}=(k_1,k_2,k_3,k_4,k_5)$ is a vector of constants {and where
\[
{\myAd(\rho_0})= \left(\begin {array}{cc} \mbox{Id}_3 & 0 \\[15pt] \left(\begin{array}{ccc} \ds\frac{x_0y_2-x_2y_0}{\kappa} & x_1y_0-x_0y_1 & 0 \\ x_0y_1-x_1y_0 & 0 &  \ds\frac{x_2y_0-x_0y_2}{\kappa} \end{array} \right)& \mbox{Id}_2
\end {array}
\right) \left(\begin {array}{cc} ({\myAd(g)})\big{|}_{\rho_0} &0\\ 0 & g\big{|}_{\rho_0} 
\end {array}\right)
\]
where
\[
{\myAd(g)})\big{|}_{\rho_0}=\left( \begin{array}{ccc} \ds\frac{(2y_0-y_1-y_2)x_0-(x_1+x_2)y_0+y_2x_1+x_2y_1}{\kappa} & \ds\frac{(y_0-y_2)(y_0-y_1)}{\kappa} & \ds\frac{(x_0-x_2)(x_1-x_0)}{\kappa} \\[11pt]
 \ds\frac{2(y_0-y_2)(x_0-x_2)}{\kappa^2} & \ds\frac{{(y_0-y_2)}^2}{\kappa^2} & -\ds\frac{{(x_0-x_2)}^2}{\kappa^2} \\[11pt]
 2(y_0-y_1)(x_1-x_0) & -{(y_0-y_1)}^2 & {(x_0-x_1)}^2
 \end{array}\right),
\]
and
\[
g\big{|}_{\rho_0}= \left( \begin {array}{cc} {\ds\frac {y_{{2}}-y_{{0}}}{\kappa}}&{\ds\frac {x_{{0}}-x_{
{2}}}{\kappa
}}\\ \noalign{\medskip}y_{{0}}-y
_{{1}}&x_{{1}}-x_{{0}}\end {array} \right).
\]}
We will show in the next section that
a first integral of the Euler--Lagrange equations is given by
\begin{equation}\label{ASL2FirstInt}k_1k_4k_5+k_2k_5^2-k_3k_4^2=V_0^1V_0^4V_0^5+V_0^2{(V_0^5)}^2-V_0^3{(V_0^4)}^2.\end{equation}

\subsection{The general solution}
We now show how to obtain the solution to the Euler--Lagrange equations in terms of the original variables, given the vector of invariants and the constants in the conservation laws, (\ref{noetherSA2}).

\begin{theorem} Suppose a solution $(\tau_k)$, $(\kappa_k)$ to the Euler--Lagrange equations (\ref{ASL2ELinv}), is given, so that the vector of invariants $({\rm S}_kV(I))$  appearing in the conservation laws (\ref{ASL2CLs}) is known, 
and that  $V_0^4V_0^5\ne 0$.
Suppose further that a vector of constants ${\bf k}=(k_1,k_2,k_3,k_4,k_5)$ satisfying $k_4k_5\ne 0$ is given. 
 Then the general solution to the Euler--Lagrange equations, in terms of $(x_k,y_k)$ is given by
\begin{equation}\label{gensolSA2} \left(\begin{array}{c} x_k\\y_k\\1\end{array}\right) = \rho_k^{-1}\left(\begin{array}{c} 0\\0\\1\end{array}\right) =\left(\begin{array}{c}  -\alpha_k d_k +\beta_k b_k \\  \alpha_k c_k -\beta_k c_k  \\1\end{array}\right).\end{equation}
where, setting  $\mu\coloneqq k_1k_4k_5+k_2k_5^2-k_3k_4^2$,
\begin{equation}\label{paramEqsSA2}\begin{aligned}
a_0&=-\ds\frac{V^5_0}{V^4_0} c_0 +\ds\frac{k_4}{V^4_0}\\
b_0&= -\ds\frac{V^5_0k_5}{V^4_0k_4}c_0+\ds\frac{k_4k_5-V^4_0V^5_0}{V^4_0k_4}\\
d_0&=\ds\frac{k_5}{k_4}c_0 +\ds\frac{V^4_0}{k_4}\\
\alpha_0&=\ds\frac{\mu V^5_0}{\left(V^4_0k_4\right)^2} c_0^2 +\ds\frac{\left(k_2k_5^2+k_3k_4^2 +\mu  \right)  V^4_0 (V^5_0)^2 -2\mu k_4k_5V^5_0}{\left(V^4_0k_4\right)^2 V^5_0k_5} c_0\\
&\quad +\ds\frac1{\left(V^4_0k_4\right)^2 V^5_0k_5} \left(k_2k_5\left(V^4_0V^5_0\right)^2 -\left(k_2k_5^2+k_3k_4^2 +\mu   \right)V^4_0V^5_0k_4 +k_4^2k_5\left(V_0^3{(V_0^4)}^2+\mu   \right)  \right)\\
\beta_0 &= -\ds\frac{\mu}{k_4^2 V^4_0} c_0^2 - \ds\frac{V^4_0\left(k_2k_5^2+k_3k_4^2+\mu    \right)}{k_4^2k_5V^4_0} c_0 +\ds\frac{k_4^2V^2_0-k_2(V^4_0)^2}{k_4^2 V^4_0}
\end{aligned}
\end{equation}
and where
\begin{equation}\label{ASL2crsoln}
c_k = \prod_{l=0}^{k-1} \left(\ds\frac{\kappa_l V^5_l}{V^4_l} -1\right) c_0 -\sum_{l=0}^{k-1} \prod_{m=l+1  }^{k-1}  \left(\ds\frac{\kappa_l V^5_m}{V^4_m} -1\right)  \ds\frac{k_4\kappa_l}{V^4_l}\end{equation}
where in this last equation, $c_0$ is the initial datum, or constant of integration. 
\end{theorem}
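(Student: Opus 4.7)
The plan is to reduce the theorem, via the normalisation equations and the conservation laws, to a solvable linear recurrence for the single scalar sequence $(c_k)$, from which all remaining data are recovered by algebra.

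First I would note that the normalisation equations for $\rho_k$ give $\rho_k\cdot(x_k,y_k)=(0,0)$, so $(x_k,y_k,1)^T=\rho_k^{-1}(0,0,1)^T$; inverting the standard $3\times 3$ matrix form of $\rho_k$ immediately produces the shape (\ref{gensolSA2}). The task is therefore to recover the parameters $(a_k,b_k,c_k,d_k,\alpha_k,\beta_k)$. Next I would solve the algebraic system obtained by rewriting (\ref{noetherSA2}) as $\mathbf{k}\,\myAd(\rho_0)^{-1}=V(I)$. Using the block factorisation (\ref{adgk2}), this gives five polynomial equations in the six parameters of $\rho_0$, which together with the unimodular constraint $a_0d_0-b_0c_0=1$ form six equations in six unknowns. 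I would then compute a lexicographic Gr\"obner basis with the elimination order placing $c_0$ last, mirroring the $SL(2)$ treatment of \eqref{GBSL2}. Under the hypotheses $V_0^4V_0^5\ne 0$ and $k_4k_5\ne 0$, the basis should express $a_0,b_0,d_0,\alpha_0,\beta_0$ as the rational functions of $c_0$ displayed in (\ref{paramEqsSA2}), and should produce one polynomial independent of $c_0$, namely the first integral (\ref{ASL2FirstInt}).

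To propagate the solution through the lattice, I would observe that since $(\es-\id)(V(I)\,\myAd(\rho_0))=0$, the same algebraic identities hold at level $k$ with $(\rho_0,V(I))$ replaced by $(\rho_k,\es_kV(I))$; in particular $a_k=-(V^5_k/V^4_k)c_k+k_4/V^4_k$. Reading the $(2,1)$ entry of $\rho_{k+1}=K_k\rho_k$, with $K_k=\es_kK_0$ from (\ref{CMex1b}), gives $c_{k+1}=-\kappa_k a_k-c_k$, which after substitution becomes the inhomogeneous linear recurrence
\[
c_{k+1}=\Bigl(\ds\frac{\kappa_k V^5_k}{V^4_k}-1\Bigr)c_k-\ds\frac{\kappa_k k_4}{V^4_k}.
\]
Standard variation-of-constants telescoping then yields (\ref{ASL2crsoln}). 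Having $c_k$, the remaining parameters at level $k$ follow from the level-$k$ form of (\ref{paramEqsSA2}), and substitution into $\rho_k^{-1}(0,0,1)^T$ completes the proof.

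The main obstacle is the algebraic elimination step. The conservation-law equations are bilinear in the $SL(2)$ parameters and the translations, and the Gr\"obner computation must disentangle them cleanly into one first integral plus rational expressions in $c_0$ of the compact form written in \eqref{paramEqsSA2}. Showing in particular that the quadratic-in-$c_0$ formulae for $\alpha_0$ and $\beta_0$ reduce to exactly the expressions given there requires repeated use of the first integral \eqref{ASL2FirstInt} to absorb cross-terms; this is the one place where the argument becomes unavoidably computational rather than structural, and it is the natural analogue of the conic reduction \eqref{GBSL2:2} in the pure $SL(2)$ case.
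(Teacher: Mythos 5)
Your proposal is correct and follows essentially the same route as the paper: rewrite the conservation laws $\mathbf{k}=V(I)\,\myAd(\rho_0)$ as polynomial equations for the frame parameters, eliminate via a lexicographic Gr\"obner basis to express $a_0,b_0,d_0,\alpha_0,\beta_0$ rationally in $c_0$ (producing the first integral \eqref{ASL2FirstInt} as the $c_0$-free element), then use the $(2,1)$ entry of $\rho_{1}=K_0\rho_0$ with the back-substitution $a_0=-(V_0^5/V_0^4)c_0+k_4/V_0^4$ to obtain the solvable linear recurrence $c_{k+1}=(\kappa_k V_k^5/V_k^4-1)c_k-\kappa_k k_4/V_k^4$. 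The only cosmetic differences are your choice of elimination order and your explicit appeal to $(\es-\id)(V(I)\myAd(\rho_0))=0$ to justify shifting the parameter formulae to level $k$, which the paper leaves implicit.
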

\begin{proof}
If we can solve for the discrete frame $(\rho_k)$
\[\rho_k=\left(\begin{array}{ccc}a_k & b_k & \alpha_k \\ c_k & d_k & \beta_k\\ 0&0&1\end{array}\right),\] then we have by the normalisation equations, that Equation (\ref{gensolSA2}) holds.
We consider (\ref{noetherSA2})
  as five equations for $\{a_0,b_0,c_0,d_0, \alpha_0, \beta_0\}$, which when written out in detail are
\[
\begin{aligned}
0=&(a_0d_0+b_0c_0)k_1+2b_0d_0k_2-2a_0c_0k_3+(b \beta_0 + d_0 \alpha_0) k_4 - (a_0 \beta_0 + c_0 \alpha_0)-V_0^1,\\
0=&-c_0^2k_3+c_0k_1d_0-c_0k_5\beta_0+k_2d_0^2+k_4d_0k_2-V_0^2,\\
0=&a_0^2k_3-a_0b_0k_1+a_0k_5\alpha_0-b_0^2k_2-b_0k_4\alpha_0-V_0^3,\\
0=&-c_0k_5+k_4d_0-V_0^4,\\
0=&a_0k5-b_0k_4-V_0^5.
\end{aligned}
\]
Computing a Gr\"{o}bner basis associated to these equations  with the lexicographic ordering $k_2<k_1<a_0<b_0<d_0<\beta_0<\alpha_0$,  we obtain the first integral noted in Equation (\ref{ASL2FirstInt}),
%a first integral of the Euler--Lagrange equations,
%\[k_1k_4k_5+k_2k_5^2-k_3k_4^2=V_0^1V_0^4V_0^5+V_0^2{(V_0^5)}^2-V_0^3{(V_0^4)}^2,\]
and the expressions for $a_0$, $b_0$, $d_0$, $ \alpha_0$ and $\beta_0$ in terms of $c_0$ given in Equations (\ref{paramEqsSA2}), provided $V^4_0$, $V^5_0$, $k_4$ and $k_5$ are all non zero. 

We have $\rho_1=K_0\rho_0$ so that we have a recurrence equation for $(c_k)$, specifically, 
\[ c_1=-\kappa a_0 - c_0= \left(\ds\frac{\kappa V^5_0}{V^4_0} -1\right) c_0 -\ds\frac{k_4\kappa}{V^4_0}\]
where we have back substituted for $a_0$ from $\eqref{paramEqsSA2}$. This is linear and can be easily solved to obtain the expression for $c_k$ given in Equation (\ref{ASL2crsoln}).
Substituting this into the shifts of $\eqref{paramEqsSA2}$ yields $(a_k)$, $(b_k)$, $(d_k)$, $(\alpha_k)$ and $(\beta_k)$
and substituting these into (\ref{gensolSA2}) yields the desired  result.\end{proof}

\section{The $SL(2)$ projective action}

In this example, we show some techniques for calculating the recurrence relations when the action is nonlinear.
We detail the calculations for a class of one-dimensional  $SL(2)$  Lagrangians, which are invariant under 
the projective action of  $SL(2)$. This is defined  by
\begin{equation}\label{sl2actn}
\widetilde{x_0}=g\cdot x_0 =  \ds\frac{ax_0+b}{cx_0+d}, \qquad ad-bc=1.
\end{equation}

\subsection{The Adjoint action}
The infinitesimal vector fields are
\[
{\bf v}_a=2x \partial_{x} , \quad {\bf v}_b= \partial_{x}, \quad {\bf v}_c=-x^2 \partial_{x}.
\]
We have that the induced action on these are

\[
\left(\begin{array}{ccc} \widetilde{{\bf v}_a} & \widetilde{{\bf v}_b} & \widetilde{{\bf v}_c} \end{array}\right) = \left(\begin{array}{ccc} {\bf v}_a & {\bf v}_b & {\bf v}_c \end{array}\right) {\myAd(g)}^{-1} 
\]
where
\begin{equation}
\myAd(g)= \begin{blockarray}{cccc} & a & b & c \\ \begin{block}{c(ccc)} a & ad+bc & -ac & bd\\ b & -2ab & a^{2} & -b^{2}\\ c & 2cd&-c^{2}&d^{2}\\\end{block}\end{blockarray} \
\end{equation}
which matches with \eqref{sl2adj} as expected.

\subsection{The discrete frame, the generating invariants and their syzygies}

We choose the normalisation equations 
$$
\widetilde{x_0}=\ds\frac{1}{2}, \quad \widetilde{x_1}=0, \quad \widetilde{x_2}=-\ds\frac{1}{2}
$$
which we can solve, together with $ad-bc-1$ for $a$, $b$,  $c$ and $d$ to find the frame
\begin{equation}\label{projSL2rho0}
\rho_0 = \ds\frac{\sqrt{x_0-x_2}}{\sqrt{(x_0-x_1)(x_1-x_2)}}
\begin{pmatrix}
\ds\frac{1}{2} & -\ds\frac{x_1}{2} \\[11pt] %																%adjusted space between matrix lines
\ds\frac{x_2-2x_1+x_0}{x_0-x_2} & \ds\frac{x_0x_1-2x_{0}x_2+x_1x_2}{x_0-x_2} 
\end{pmatrix}
\end{equation}
and we take $$\rho_k=\text{S}_k\rho_0.$$

\subsection{The generating discrete invariants}
The famous, historical invariant for this action, given four points, is the cross ratio,
\begin{equation}\label{CrossRatio}
\kappa=\ds\frac{(x_0-x_1)(x_2-x_3)}{(x_0-x_3)(x_2-x_1)}.
\end{equation}
By the Replacement Rule, Replacement Rule, Theorem \ref{reprule}, we have that $$\kappa(x_0,x_1,x_2,x_3)=\kappa(\rho_0\cdot x_0,\, \rho_0\cdot x_1,\, \rho_0\cdot x_2,\,  \rho_0\cdot x_3)=\kappa\left(\textstyle\ds\frac12,\, 0,\, -\textstyle\ds\frac12,\, I^x_{0,3}\right)$$ or
$$\kappa = \ds\frac{1+2 I^x_{0,3}}{1-2 I^x_{0,3}}.$$

The Maurer--Cartan matrix is then,
\begin{equation}\label{projSL2MC}
K_0 = {\iota_0}(\rho_1) =\sqrt{\ds\frac{\kappa-1}{4\kappa}}\left(\begin{array}{cc} 1 & \scalebox{0.8}{\mbox{$\ds\frac12$}} \\[12pt] -\ds\frac{6\kappa+2}{\kappa-1} & 1\end{array}\right).
\end{equation}
By the general theory of moving frames, the discrete invariants are generated by $\kappa$ and its shifts.

We now show how the recurrence relations may be obtained for this non-linear action.
\subsection{The generating differential invariants}
We now consider $x_j=x_j(t)$ where $t$ is an invariant parameter. The induced action on these is given by
$$g\cdot x^{\prime}_j= \ds\frac{{\rm d}}{{\rm d}t} g\cdot x_j= \ds\frac{x^{\prime}_j}{(c x_j +d)^2}$$ and hence we have
for $$\rho_k=\left(\begin{array}{cc} a_k & b_k\\ c_k& d_k\end{array}\right)$$ that
$$\rho_k\cdot x^{\prime}_j = \ds\frac{x^{\prime}_j}{(c_k x_j +d_k)^2}.$$
We define
$$ \sigma^x_j \coloneqq \rho_0\cdot x_{j,t}=\ds\frac{x^{\prime}_j}{(c_0 x_j +d_0)^2}=\ds\frac{x^{\prime}_j(x_1-x_0)}{(x_0-x_2)(x_0-x_1)}$$
where $c_0$ and $d_0$ are given in Equation (\ref{projSL2rho0}). In terms of the $\sigma^x_j$, it is straightforward to show
$$N_0={\iota_0}\left(\ds\frac{\text{d}}{\text{d}t}\rho_{0}\right) = \left(\begin{array}{cc} \ds\frac12\sigma^x_1-\ds\frac12\sigma^x_0 & -\sigma^x_1\\ 2\sigma^x_0 -4\sigma^x_1+2\sigma^x_1 & -\ds\frac12\sigma^x_1+\ds\frac12\sigma^x_0\end{array}\right).$$

We now obtain the recurrence relations for the $\sigma^x_j$.
First observe that since $\rho_k\cdot x^{\prime}_j$ is an invariant, we have for all $k$ and $j$ that 
$$\rho_k\cdot x^{\prime}_j = \ds\frac{\widetilde{x^{\prime}_j}}{(\widetilde{c_k} \widetilde{x_j} +\widetilde{d_k})^2}=\ds\frac{\widetilde{x^{\prime}_j}}{(\widetilde{c_k} \widetilde{x_j} +\widetilde{d_k})^2}\Bigg\vert_{\rho_0}
=\ds\frac{\rho_0\cdot {x^{\prime}_j}}{(\widetilde{c_k}\vert_{\rho_0} \rho_0\cdot {x_j} +\widetilde{d_k}\vert_{\rho_0})^2}.$$
Now the frames $\rho_k$ are equivariant, and so $\widetilde{\rho_k}=\rho_k g^{-1}$. Hence
$$ \left(\begin{array}{cc} \widetilde{a_k} &\widetilde{b_k} \\ \widetilde{c_k}  & \widetilde{d_k} \end{array}\right)\Big\vert_{\rho_0} = \rho_k\rho_0^{-1}=\rho_k\rho_{m-1}^{-1}\cdots \rho_1\rho_0^{-1}=(\text{S}^{m-1}K_0)\cdots K_0.$$
In particular, we have
\begin{equation}\label{projSL2syz1}
\text{S}\sigma_0^x=\rho_1\cdot x_1^\prime = \ds\frac{\rho_0\cdot x_1^\prime}{((K_0)_{2,1}\rho_0\cdot x_1 + (K_0)_{2,2})^2} = \ds\frac{4\kappa}{\kappa-1} \sigma^x_1\end{equation}
since $\rho_0\cdot x_1=0$ and  $\rho_0\cdot x_1^\prime=\sigma^x_1$.   Next, 
\begin{equation}\label{projSL2syz2}
\text{S}_2\sigma_0^x=\rho_2\cdot x_2^\prime = \ds\frac{\rho_0\cdot x_2^\prime}{(((\text{S}K_0)K_0)_{2,1}\rho_0\cdot x_2 + ((\text{S}K_0)K_0)_{2,2})^2} = \ds\frac{\kappa_1(\kappa-1)}{(\kappa_1-1)\kappa} \sigma^x_2\end{equation}
noting that by the normalisation equations, $\rho_0\cdot x_1=0$ and $\rho_0\cdot x_2=-1/2$. Similarly, one can prove that
\begin{equation}\label{projSL2syz3}
\text{S}\sigma^x_1 = \ds\frac{\kappa-1}{4\kappa} \sigma^x_2.\end{equation}

We are now ready to calculate the differential difference syzygy. Calculating \eqref{DiffKeqnN} and
 equating components and using the syzygies (\ref{projSL2syz1}), (\ref{projSL2syz1}) and (\ref{projSL2syz3}), we obtain
 \begin{equation}\label{projSL2Heqn}
 \begin{array}{rcl} \ds\frac{{\rm d}}{{\rm d}t} \kappa &= & \ds\frac{\kappa(\kappa-1)\kappa_1(\kappa_2-1)}{\kappa_2(\kappa_1-1)}\text{S}_3\sigma^x_0
 + \ds\frac{\kappa(\kappa_1-1)}{\kappa_1} \text{S}_2\sigma^x_0 - (\kappa-1) \text{S}\sigma^x_0 - \kappa(\kappa-1)\sigma^x_0\\[11pt] &=&\mathcal{H}\sigma^x_0\end{array}
 \end{equation}
where this defines the linear difference operator $\mathcal{H}$.

\subsection{The Euler--Lagrange equations and the conservation laws}

Given a Lagrangian of the form
$$ \mathcal{L}[x] = \sum L(\kappa, \kappa_1, \dots, \kappa_{J})$$
we have from Theorem \ref{mainThmELeqns} %( Theorem 5.2 in \cite{newpaper})} 
that the Euler--Lagrange equation is 
$$ 0=\mathcal{H}^*(\text{E}_{\kappa}(L))$$
where $\mathcal{H}$ is given in Equation (\ref{projSL2Heqn}).
Set $\mathcal{H}=\alpha \text{S}_3 + \beta \text{S}_2 + \gamma\text{S}+\delta$ where this defines $\alpha$, $\beta$, $\gamma$ and $\delta$, specifically,
\begin{equation}\label{alphaetcdefs}
\begin{array}{rcl}
\alpha&=& \ds\frac{\kappa(\kappa-1)\kappa_1(\kappa_2-1)}{\kappa_2(\kappa_1-1)},\\[11pt]
\beta&=&\ds\frac{\kappa(\kappa_1-1)}{\kappa_1}, \\[11pt]
\gamma&=&  - (\kappa-1), \\
\delta&=&- \kappa(\kappa-1).\end{array}
\end{equation} 
Then the Euler--Lagrange equation is
$$0=\mathcal{H}^*(\text{E}_{\kappa}(L)) = \text{S}_{-3}(\alpha \text{E}_{\kappa}(L)) + \text{S}_{-2}(\beta \text{E}_{\kappa}(L)) + \text{S}_{-1}(\gamma \text{E}_{\kappa}(L)) + \delta \text{E}_{\kappa}(L).$$

To obtain the conservation law, we need the matrix of infinitesimals, which is
$$\Phi_0=\bordermatrix{ & a & b & c\cr
x_0 & 2x_0 & 1 &-x_0^2}$$ and so $$\Phi_0(I)=\bordermatrix{ & a & b & c\cr
x_0 & 1 & 1 &- \ds\frac14}.$$
Recall the relation
$$\text{E}_{\kappa}(L)\mathcal{H}\sigma^x_0 = \mathcal{H}^*(\text{E}_{\kappa}(L)) \sigma^x_0 +(\text{S}-\mbox{id})(A_{\mathcal{H}}(\text{E}_{\kappa}(L),\sigma^x_0))$$
where for any suitable arguments $F$ and $G$, 
\begin{equation}\begin{aligned}A_{\mathcal{H}}(F,G) = & \left( \text{S}_{-3}(\gamma F) + \text{S}_{-2}(\beta F) + \text{S}_{-1}(\alpha F)\right)G\\
& + \left( \text{S}_{-1}(\beta F) + \text{S}_{-2}(\alpha F)\right) \text{S}(G) + \text{S}_{-1}(\alpha F) \text{S}_2(G).
\end{aligned}
\end{equation}
Hence by Theorem \ref{thmi}, the conservation law is
\begin{equation}\begin{aligned}
\mathbf{k} =& \left( \text{S}_{-1}(\gamma \text{E}_{\kappa}(L)) + \text{S}_{-2}(\beta \text{E}_{\kappa}(L)) + \text{S}_{-1}(\alpha \text{E}_{\kappa}(L))\right)\Phi_0(I)\myAd(\rho_0)\\
& + \left( \text{S}_{-3}(\beta \text{E}_{\kappa}(L)) + \text{S}_{-2}(\alpha \text{E}_{\kappa}(L))\right) \Phi_0(I) \left(\text{S}\myAd(\rho_0)\right) \\ &+ \text{S}_{-1}(\alpha \text{E}_{\kappa}(L))\Phi_0(I) \left(\text{S}_2 \myAd(\rho_0)\right).
\end{aligned}
\end{equation}
Using
$$\begin{aligned}\text{S}\myAd(\rho_0)=& \myAd(\rho_1)=\myAd(K_0)\myAd(\rho_0),\\ \text{S}_2\myAd(\rho_0)=&\myAd(\text{S}(K_0))\myAd(K_0)\myAd(\rho_0)
\end{aligned}$$
and collecting terms,
we arrive at the conservation laws in the form
\begin{equation}\label{projSL2CLveqn} \mathbf{k} =  V(I)\myAd(\rho_0)\end{equation}
where this defines the vector $V(I) = (V^1_0, V^2_0,V^3_0)^T$ {and where
\[
\myAd(\rho_0)=\left(\begin{array}{ccc} \ds\frac{x_1^2 -x_0x_2}{(x_0-x_1)(x_1-x_2)} & \ds\frac{2x_1-x_2-x_0}{2(x_0-x_1)(x_1-x_2)} & \ds\frac{x_1(2x_0x_2-x_1(x_0+x_2))}{2(x_0-x_1)(x_1-x_2)}\\[11pt]
\ds\frac{(x_0-x_2)x_1}{2(x_0-x_1)(x_1-x_2)} & \ds\frac{x_0-x_2}{4(x_0-x_1)(x_1-x_2)} &\ds\frac{x_1^2(x_2-x_0)}{4(x_0-x_1)(x_1-x_2)} \\[11pt]\ds\frac{2(x_2-2x_1+x_0)((x_1-2x_2)x_0+x_1x_2)}{(x_0-x_2)(x_0-x_1)(x_1-x_2)} & - \ds\frac{{(x_2-2x_1+x_0)}^2}{(x_0-x_2)(x_0-x_1)(x_1-x_2)} & \ds\frac{{((x_1-2x_2)x_0+x_1x_2)}^2}{(x_0-x_2)(x_0-x_1)(x_1-x_2)} \end{array}\right).
\]
}
{Explicitly, $V(I)$ is given by
\begin{align*}
V(I)=\left(\begin{array}{ccc} 1 & 1 & \ds\frac{1}{4} \end{array} \right)&\{\left(\text{S}_{-1}(\gamma \text{E}_{\kappa}(L)) + \text{S}_{-2}(\beta \text{E}_{\kappa}(L)) + \text{S}_{-1}(\alpha \text{E}_{\kappa}(L))\right) \\& \qquad + (\text{S}_{-3}(\beta \text{E}_{\kappa}(L)) + \text{S}_{-2}(\alpha \text{E}_{\kappa}(L)))\myAd(K_0)\\
&\qquad  \qquad +\text{S}_{-1}(\alpha \text{E}_{\kappa}(L))\myAd({\rm S}(K_0)) \myAd(K_0) \}
\end{align*}
where
\[
\myAd(K_0)=\left( \begin{array}{ccc} -\ds\frac{\kappa+1}{2\kappa} & \ds\frac{3\kappa + 1}{2\kappa} & \ds\frac{\kappa-1}{8\kappa} \\[11pt] \ds\frac{-\kappa + 1}{4\kappa} & \ds\frac{\kappa - 1}{4\kappa} & \ds\frac{-\kappa+1}{16\kappa} \\[11pt] -\ds\frac{3\kappa + 1}{\kappa} & -\ds\frac{{(3\kappa + 1)}^2}{(\kappa-1)\kappa} & \ds\frac{\kappa-1}{4\kappa}\end{array}\right)
\]
and where
\[
\myAd({\rm S}(K_0))\myAd(K_0) = \ds\frac{1}{\kappa \kappa_1}\left(\begin{array}{ccc} \ds\frac{(1-\kappa_1)\kappa + \kappa_1+1}{2} & \ds\frac{(1-3\kappa_1)\kappa^2 + 2(1-\kappa_1)\kappa + 1}{2(\kappa-1)} & \ds\frac{(\kappa_1+1)(1-\kappa)}{8}\\[11pt]
\ds\frac{(\kappa_1-1)(\kappa+1)}{4} & \ds\frac{(\kappa_1-1){(\kappa+1)}^2}{4(\kappa -1)} & \ds\frac{(\kappa_1+1)(1-\kappa)}{16}\\[11pt]
\ds\frac{(3\kappa -1)\kappa_1^2 + 2(\kappa-1)\kappa_1 - \kappa - 1}{\kappa_1 -1} & -\ds\frac{{(\kappa_1(3\kappa-1)-\kappa-1)}^2}{(\kappa_1-1)(\kappa-1)} & \ds\frac{(\kappa-1){(\kappa_1 + 1)}^2}{4(\kappa_1-1)}
\end{array}\right).
\]
}

\subsection{The general solution}

If we can solve for the discrete frame $(\rho_k)$ then we have
\begin{equation}\label{projxfinal}x_k = \rho_k^{-1} \cdot \ds\frac12 = \ds\frac{d_k-2 b_k}{2 a_k - c_k}\end{equation}
since $\rho_k\cdot x_k = \textstyle\ds\frac12$ is the normalisation equation.

The Adjoint representation of the Lie group $G$ is, in this case, precisely that of the linear action discussed in \S2, and so we may make use of the simplification of the algebraic equations for the 
group parameters stemming from the conservation laws, $\mathbf{k} = V(I)\myAd(\rho_0)$, that we described there. However, the Maurer--Cartan matrix is different, and so the recurrence relations needed to complete the solution, differ. Nevertheless, we again find that the remaining recurrence relations are diagonalisable, and are therefore easily solved.

\bigskip

We again have equations (\ref{GBSL2:1})--(\ref{GBSL2:4}), where now the $V_0^i$ are those of
equation (\ref{projSL2CLveqn}); we rewrite these here for convenience,
\begin{subequations}\label{projGBSL2}
\begin{align}
 k_1^2+4k_2k_3-{(V_0^1)}^2-4V_0^2V_0^3&=0,\label{projGBSL2:1}\\
 k_3c_0^2-k_1c_0d_0-k_2d_0^2+V_0^2&=0,\label{projGBSL2:2}\\
 2b_0V_0^2-2c_0k_3+(k_1-V_0^1)d_0&=0,\label{projGBSL2:3}\\
 2a_0V_0^2-c_0(k_1+V_0^1)-2k_2d_0&=0.\label{projGBSL2:4}
\end{align}
\end{subequations}
The recurrence relation is $\rho_1=K_0 \rho_0$ or
$$ \left(\begin{array}{cc} a_1 & b_1\\ c_1 & d_1\end{array}\right)= \sqrt{\ds\frac{\kappa-1}{4\kappa}}\left(\begin{array}{cc} 1 & \scalebox{0.8}{\mbox{$\ds\frac12$}} \\[12pt] -\ds\frac{6\kappa+2}{\kappa-1} & 1\end{array}\right) \left(\begin{array}{cc} a_0 & b_0\\ c_0 & d_0\end{array}\right)    $$
leading to the equations
$$c_1=\sqrt{\ds\frac{\kappa-1}{4\kappa}} \left( -\ds\frac{6\kappa+2}{\kappa-1} a_0 +c_0\right),\qquad d_1=-\sqrt{\ds\frac{\kappa-1}{4\kappa}} \left(  -\ds\frac{6\kappa+2}{\kappa-1} b_0 +d_0\right).$$
Using these to eliminate $a_0$ and $b_0$ from (\ref{projGBSL2:3}) and (\ref{projGBSL2:4}), leads to the linear system,
$$\left(\begin{array}{c} c_1 \\ d_1\end{array}\right) = Q \Lambda_0 Q^{-1} \left(\begin{array}{c} c_0 \\ d_0\end{array}\right) $$
where $Q$ is a constant matrix and $\Lambda_0$ is diagonal. Indeed, setting $$\mu=\sqrt{k_1^2+4k_2k_3}$$ we have
$$ Q =\ds\frac1{2\mu} \left(\begin{array}{cc} \mu + k_1  &\mu-k_1\\ 2k_3 & -2 k_3\end{array}\right).$$
Further, $\Lambda_0=\mbox{diag}(\lambda_0^1,\lambda_0^2)$ where
$$\begin{array}{rcl}
\lambda_0^1 &=& \ds\frac1{2\sqrt{\kappa-1}\,\sqrt{\kappa}\,V^2_0} \left(-(3\kappa+1)\left(\mu +V^1_0\right)+(\kappa-1)V^2_0 \right),\\[11pt]
\lambda_0^2 &=& \ds\frac1{2\sqrt{\kappa-1}\,\sqrt{\kappa}\,V^2_0} \left((3\kappa+1)\left(\mu-V^1_0\right)+(\kappa-1)V^2_0 \right).
\end{array}$$
We have then that
$$\left(\begin{array}{c} c_k \\ d_k \end{array}\right) = Q \left(\begin{array}{cc} \prod_{l=0}^{k-1}\lambda_l^1&0\\0& \prod_{l=0}^{k-1}\lambda_l^2\end{array}\right) Q^{-1}\left(\begin{array}{c} c_0 \\ d_0\end{array}\right) $$
where in this last, $c_0$ and $d_0$ are the initial data and $\lambda^l_k=\text{S}_k\lambda^l_0$.

Substituting these into the $k^{\mbox{th}}$ shifts of (\ref{projGBSL2:3}) and (\ref{projGBSL2:4}), specifically,
\begin{subequations}\label{projGBSL2shift}
\begin{align}
 2b_kV_k^2-2c_kk_3+(k_1-V_k^1)d_k&=0,\label{projGBSL2r:3}\\
 2a_kV_k^2-c_k(k_1+V_k^1)-2k_2d_k&=0\label{projGBSL2r:4}
\end{align}
\end{subequations}
 yields the expressions for $a_k$ and $b_k$ needed to obtain, finally, $x_k$ given in (\ref{projxfinal}).

\section{Conclusions}
In these two papers we have introduced difference moving frames and applications to variational problems. We have also shown how to use these frames to solve Lie group invariant difference (recurrence) relations. We have considered relatively simple, solvable Lie group actions and some $SL(2)$ actions. Open problems include the efficient use of difference frames for numerical approximations which preserve symmetry. 

%\section*{Acknow

\section*{Acknowledgements}
The authors would like to thank the SMSAS at the University of Kent and the EPSRC (grant EP/M506540/1) for funding this research.

\bibliographystyle{agsm}
%\bibliography{newbib}

\end{document}